\documentclass[11pt]{article}
\usepackage{amsthm, amsmath, amssymb, amsfonts, url, booktabs, tikz, setspace, fancyhdr, bm}
\usepackage{geometry}
\usepackage{hyperref, enumerate}
\usepackage[shortlabels]{enumitem}
\usepackage[babel]{microtype}
\usepackage[english]{babel}
\usepackage[capitalise]{cleveref}
\usepackage{comment}
\usepackage{bbm}
\usepackage{csquotes}
\usepackage{mathabx}
\usepackage{tikz}
\usepackage{graphicx}
\usepackage{float}
\usepackage{xcolor}
\usepackage{mathtools}
\usepackage{mathrsfs}

\usetikzlibrary{positioning, arrows.meta, shapes.geometric}

\counterwithin{figure}{section}

\newtheorem{theorem}{Theorem}[section]

\newtheorem{lemma}[theorem]{Lemma}

\newtheorem{claim}[theorem]{Claim}

\usetikzlibrary{decorations.pathmorphing}
\newcommand{\SO}{\operatorname{SO}}
\theoremstyle{definition}

\newtheorem*{defn-non}{Definition}

\newlist{Case}{enumerate}{2}
\setlist[Case, 1]{%
    label           =   {\bfseries Case \arabic*.},
    labelindent=1em ,labelwidth=1.3cm, labelsep*=1em, leftmargin =!
}
\setlist[Case, 2]{%
    label           =   {\bfseries Subcase \arabic{Casei}.\arabic*.},
    labelindent=-1em ,labelwidth=1.3cm, labelsep*=1em, leftmargin =!
}

\newenvironment{poc}{\begin{proof}[Proof of the claim]}{\end{proof}}

\usepackage{todonotes}

\title{The sharp asymptotic density of zero-sum-free spherical sets} 
\author{
Hong-Jun Ge\thanks{School of Mathematical Sciences, University of Science and Technology of China,
Hefei, China. Email: gehj22@mail.ustc.edu.cn.}
\and 
Zixiang Xu\thanks{School of Mathematical Sciences, Zhejiang University, Hangzhou, China. Email: zixiangxu@zju.edu.cn.}
}

\date{}
\begin{document}
\maketitle
\begin{abstract}
A measurable set \(A\subseteq \mathbb S^{d-1}\) is called zero-sum-free if
there are no \(\boldsymbol{x},\boldsymbol{y},\boldsymbol{z}\in A\) with
\(\boldsymbol{x}+\boldsymbol{y}+\boldsymbol{z}=\boldsymbol{0}\). Bukh asked
whether every zero-sum-free measurable subset of \(\mathbb S^{d-1}\), for
\(d\ge3\), has normalized surface measure at most \(\frac{1}{2}\). He also
pointed out that even the asymptotic behavior as \(d\to\infty\) was
unknown. We answer Bukh's
asymptotic question by proving that every such set has normalized surface
measure at most
\(
    \frac{\lfloor (d+1)^2/2\rfloor}{d(d+1)}
    =
    \frac{1}{2}+O\left(\frac{1}{d}\right).
\)
Since the lower bound \(\frac{1}{2}\) comes from open hemispheres, this
determines the asymptotic extremal density.

By monotonicity, upper bounds in low-dimensional cases are especially important.
We use a stability argument to improve the bound from
\(\frac{3}{5}\) to \(\frac{71}{120}\) in dimensions \(4\) and \(5\). 
\end{abstract}
\section{Introduction}
Let \(\mathbb S^{d-1}\) be the unit sphere in \(\mathbb R^d\), equipped
with its rotation-invariant probability measure \(\sigma_{d-1}\). Then
\(\sigma_{d-1}(A)\) is the probability that a uniformly random point of
\(\mathbb S^{d-1}\) lies in \(A\).  A measurable set
\(A\subseteq\mathbb S^{d-1}\) is called zero-sum-free if there are no
\(\boldsymbol{x},\boldsymbol{y},\boldsymbol{z}\in A\) such that
\(  \boldsymbol{x}+\boldsymbol{y}+\boldsymbol{z}=\boldsymbol{0}.
\)
For \(d\ge2\), define
\[
    m_d:=
    \sup\left\{\sigma_{d-1}(A):
    A\subseteq\mathbb S^{d-1}\text{ is measurable and zero-sum-free}
    \right\}.
\]

There is an immediate lower bound \(m_d\ge\frac{1}{2}\).  Indeed, for any
\(\boldsymbol{u}\in\mathbb S^{d-1}\), the open hemisphere
\[
    A_{\boldsymbol{u}}
    :=
    \{\boldsymbol{x}\in\mathbb S^{d-1}:
    \boldsymbol{x}\cdot\boldsymbol{u}>0\}
\]
has measure \(\frac{1}{2}\), and if
\(\boldsymbol{x},\boldsymbol{y},\boldsymbol{z}\in A_{\boldsymbol{u}}\),
then
\(  (\boldsymbol{x}+\boldsymbol{y}+\boldsymbol{z})\cdot\boldsymbol{u}>0.
\)
Thus \(A_{\boldsymbol{u}}\) contains no zero-sum triple. Bukh asked whether this simple construction is always extremal for
\(d\ge3\), that is, whether \(m_d=\frac{1}{2}\) for every \(d\ge3\).  He
also pointed out that, for spheres, it was not even known whether
\(m_d\to\frac{1}{2}\) as \(d\to\infty\)~\cite{BukhProblems}.

This question is closely connected with some of the most basic themes in
additive combinatorics.  The equation \(x+y+z=0\) is a fundamental
three-term additive relation.  Classical sum-free sets avoid solutions to
\(x+y=z\), while cap sets in \(\mathbb F_3^n\) avoid nontrivial
three-term arithmetic progressions, equivalently nontrivial solutions to
\(x+y+z=0\).  The cap set problem has played a major role in modern
additive combinatorics, the breakthrough polynomial method of
Croot--Lev--Pach and Ellenberg--Gijswijt gave exponentially strong upper
bounds for progression-free subsets of finite vector spaces
\cite{CrootLevPach2017,EllenbergGijswijt2017}. 
Bukh's problem asks for
an analogous extremal question for the same formal additive relation  \(x+y+z=0\), but in
a very different ambient space: the finite vector space is replaced by
the unit sphere.  
Thus the usual translation symmetry of additive
combinatorics is no longer available, and the relevant structure comes
instead from rotations and finite configurations on the sphere.

This spherical setting also gives the problem a very concrete geometric
interpretation.  If
\(\boldsymbol{x},\boldsymbol{y},\boldsymbol{z}\in\mathbb S^{d-1}\) satisfy
\(\boldsymbol{x}+\boldsymbol{y}+\boldsymbol{z}=\boldsymbol{0},
\)
then they form a centered regular triangle on the sphere, indeed,
their pairwise inner products are all equal to \(-\frac{1}{2}\).  Hence
Bukh's question is the problem of determining how large a measurable
spherical set can be if it avoids centered regular triangles.  In this
form, it fits naturally into the broader geometric tradition of studying
measurable sets with forbidden configurations, such as sets with excluded
distances, geometric independence densities, spherical sets avoiding
specified angles, and related problems in Euclidean Ramsey theory and
harmonic analysis, see, for example,
\cite{Bukh2008,CastroSilva2023,DeCortePikhurko2018}.  It is also
connected to the study of finite configurations on spheres, including
spherical codes and designs, beginning with the work of
Delsarte--Goethals--Seidel and its many developments
\cite{BondarenkoRadchenkoViazovska2013,DelsarteGoethalsSeidel1977}.

The restriction \(d\ge3\) is essential.  
In the circle case,
corresponding to \(d=2\), the zero-sum triples are exactly the triples of
points with angles separated by \(2\pi/3\), and a simple orbit argument
gives the sharp value \(m_2=\frac{2}{3}\).  
Thus Bukh's question asks
whether a new rigidity phenomenon appears once one passes from circle to spheres of dimension
at least two.

Our first result answers the asymptotic question raised by Bukh~\cite{BukhProblems}.

\begin{theorem}\label{thm:main}
For every integer \(d\ge 2\) and every measurable zero-sum-free set
\(A\subseteq \mathbb S^{d-1}\), one has
\[
    \sigma_{d-1}(A)
    \le
    \frac{\lfloor (d+1)^2/2\rfloor}{d(d+1)}.
\]
\end{theorem}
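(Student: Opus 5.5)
The plan is a finite-configuration averaging argument. I find a finite set \(P\subseteq\mathbb S^{d-1}\) of \(N\) points and a family \(\mathcal T\) of zero-sum triples inside \(P\) with the following property: every subset of \(P\) containing no triple of \(\mathcal T\) has at most \(M\) points. Let \(g\) be a Haar-random rotation in \(\SO(d)\). Each \(g\boldsymbol p\) is uniformly distributed on \(\mathbb S^{d-1}\), so
\[
\mathbb E\,\bigl|\{\boldsymbol p\in P: g\boldsymbol p\in A\}\bigr| = N\sigma_{d-1}(A).
\]
Rotations preserve the relation \(\boldsymbol x+\boldsymbol y+\boldsymbol z=\boldsymbol 0\), and \(A\) is zero-sum-free. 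Hence for every \(g\) the set \(\{\boldsymbol p\in P: g\boldsymbol p\in A\}\) contains no triple of \(\mathcal T\), and so has at most \(M\) points. This gives \(\sigma_{d-1}(A)\le M/N\). Measurability of \(A\) is all that is needed for the expectation to make sense. I aim for \(N=d(d+1)\) and \(M=\lfloor (d+1)^2/2\rfloor\).

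\textbf{The configuration.} Set \(n=d+1\) and let \(H=\{\boldsymbol v\in\mathbb R^{n}:\sum_i v_i=0\}\), which I identify with \(\mathbb R^d\). Take
\[
P=\Bigl\{\boldsymbol p_{ij}:=\tfrac{1}{\sqrt2}(\boldsymbol e_i-\boldsymbol e_j): i\neq j\Bigr\}\subseteq \mathbb S^{d-1}\cap H,
\]
the normalized root system \(A_d\), which has \(N=n(n-1)=d(d+1)\) points. For distinct \(i,j,k\) we have \(\boldsymbol p_{ij}+\boldsymbol p_{jk}+\boldsymbol p_{ki}=\boldsymbol 0\), and these are the triples I put in \(\mathcal T\). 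I view \(\boldsymbol p_{ij}\) as the arc \(i\to j\) of a digraph on \([n]\). A subset of \(P\) avoiding \(\mathcal T\) is then exactly a loopless digraph on \(n\) vertices, with both orientations allowed on a pair, that has no directed triangle. So it remains to show that such a digraph has at most \(\lfloor n^2/2\rfloor\) arcs. This is sharp: orient every edge of a balanced complete bipartite graph in both directions.

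\textbf{The extremal digraph lemma.} I prove this by induction on \(n\), which I expect to be the only real step. The cases \(n\le 2\) are trivial. For the inductive step there are two cases.
\begin{itemize}
\item If the digraph has no pair \(u,v\) with both \(u\to v\) and \(v\to u\), it is an oriented graph. It then has at most \(\binom n2\le\lfloor n^2/2\rfloor\) arcs.
\item Otherwise pick such a pair \(u\leftrightarrow v\). I claim that every other vertex \(w\) sends or receives at most \(2\) arcs to or from \(\{u,v\}\). If \(w\) had \(3\) of the \(4\) possible arcs, then either \(w\to u\) and \(v\to w\) are both present, or \(w\to v\) and \(u\to w\) are both present. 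Together with \(u\leftrightarrow v\), each option closes a directed triangle (\(w\to u\to v\to w\) or \(w\to v\to u\to w\)). The same pairs rule out \(4\) arcs.
\end{itemize}
Deleting \(u\) and \(v\) in the second case and applying induction gives at most
\[
\lfloor (n-2)^2/2\rfloor+2(n-2)+2=\lfloor n^2/2\rfloor
\]
arcs.

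Combining the lemma with the averaging step gives \(\sigma_{d-1}(A)\le \lfloor (d+1)^2/2\rfloor/(d(d+1))\). As a check, for \(d=2\) this recovers \(m_2\le 2/3\).
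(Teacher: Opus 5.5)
Your proposal is correct and follows the paper's proof: the same normalized $A_d$ root configuration $\{(\boldsymbol e_i-\boldsymbol e_j)/\sqrt2\}\subseteq H_{d+1}$, the same identification of roots with arcs $i\to j$ so that directed triangles give zero-sum triples, the same Haar-rotation averaging, and the same directed Mantel bound $\lfloor n^2/2\rfloor$. The only differences are that you prove the Brown--Harary lemma yourself, and your 2-cycle deletion induction is valid. You also use only the direction ``directed triangle $\Rightarrow$ zero-sum triple'', which is all the paper's argument needs as well, whereas the paper cites Brown--Harary and additionally proves the converse direction.
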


Together with the hemisphere construction above,
Theorem~\ref{thm:main} gives \(
    \lim_{d\to\infty}m_d=\frac{1}{2}.
\)

Although Theorem~\ref{thm:main} determines the asymptotic behavior of
\(m_d\), the low-dimensional cases remain important. 
By the monotonicity
$ m_d\le m_{d-1}$ which will be shown in Lemma~\ref{lem:monotonicity}, the strong estimates in low dimensions propagate to all higher
dimensions. In particular, a proof of \(m_3=\frac{1}{2}\) would settle
Bukh's original question in every dimension \(d\ge 3\).

For \(d=4\) and \(d=5\), the bound in Theorem~\ref{thm:main} gives
$   m_d\le \frac{3}{5}.$
By analyzing the equality cases in the finite-configuration argument, 
we obtain a stability refinement that improves this estimate.

\begin{theorem}\label{thm:S3-H4-bound}
Let \(d\in\{4,5\}\) and let
\(A\subseteq \mathbb S^{d-1}\) be a measurable zero-sum-free set.
Then
\(
    \sigma_{d-1}(A)\le \frac{71}{120}.
\)
\end{theorem}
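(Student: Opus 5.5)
The plan is to use the same finite-configuration averaging that gives Theorem~\ref{thm:main}, and then add a stability step that rules out the equality case. By the monotonicity $m_5\le m_4$ (Lemma~\ref{lem:monotonicity}), it suffices to treat $d=4$.

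\textbf{The configuration behind Theorem~\ref{thm:main}.} Let $\boldsymbol v_1,\dots,\boldsymbol v_{d+1}$ be the vertices of a regular simplex. The $d(d+1)$ normalized vectors $\boldsymbol e_{ij}=(\boldsymbol v_i-\boldsymbol v_j)/\|\boldsymbol v_i-\boldsymbol v_j\|$, for $i\ne j$, all have the same length. A triple among them sums to zero exactly when it is a directed triangle $\boldsymbol e_{ij}+\boldsymbol e_{jk}+\boldsymbol e_{ki}=\boldsymbol 0$.

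For a uniformly random rotation $\rho$, the set $\{(i,j):\rho\boldsymbol e_{ij}\in A\}$ is a digraph on $d+1$ vertices (2-cycles allowed) with no directed triangle. Such a digraph has at most $\lfloor (d+1)^2/2\rfloor$ arcs. Averaging over $\rho$ gives
\[
d(d+1)\,\sigma(A)=\mathbb E\,|\{(i,j):\rho\boldsymbol e_{ij}\in A\}|\le \lfloor (d+1)^2/2\rfloor .
\]
For $d=4$ this is $20\,\sigma(A)\le 12$, i.e.\ $\sigma(A)\le 3/5$.

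\textbf{Stability on $K_5$.} Write $N(\rho)$ for the number of arcs in the random digraph. If $\sigma(A)>71/120$, then $\mathbb E N>71/6=11\tfrac56$. Since $N\le 12$, this forces $\Pr[N=12]>5/6$. I will classify all directed-triangle-free digraphs on $5$ vertices with exactly $12$ arcs. The expected extremal families are the complete bipartite graph $K_{2,3}$ with both orientations on every edge, and its relatives obtained by adding a transitive orientation inside the parts. The classification should show that each extremal digraph contains a rigid sub-pattern, such as a pair $\{\boldsymbol e_{ij},\boldsymbol e_{ji}\}$ of antipodal points both lying in $A$, or a prescribed set of $2$-cycles.

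\textbf{Exploiting the rigid pattern.} Next I will define a set of "bad" local events whose probabilities can be computed in two ways. One source is the probability that a fixed antipodal pair $\{\boldsymbol x,-\boldsymbol x\}$ lies in $A$; on a random great circle through such a pair, the bound $m_2=2/3$ applies to the hexagon configuration. The other source is the structure forced by the extremal digraphs. The idea is that if $N=12$ with probability above $5/6$, then antipodal pairs inside $A$ occur with a definite frequency. That frequency contradicts a second averaging over the hexagons $\{\pm\boldsymbol e_{ij},\pm\boldsymbol e_{jk},\pm\boldsymbol e_{ki}\}$ spanned by the triangles of the simplex: each such hexagon contains two zero-sum triples, so $A$ meets it in at most $4$ points, which constrains how many $2$-cycles can sit on a common triangle.

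The final step is a small linear program. The variables are the probabilities of each isomorphism type of the random digraph. I will write one constraint for each averaging identity: the total arc count, the count of $2$-cycles, and the per-triangle hexagon counts, all of which are invariant under the symmetric group $S_5$ acting on the configuration. The aim is to show that the LP optimum is $71/120$; the denominator $120=|S_5|$ suggests that the constraints should be symmetrized over $S_5$.

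\textbf{Expected obstacle.} The main difficulty is the classification of the near-extremal $12$-arc digraphs, together with choosing the right auxiliary inequalities so that the LP closes exactly at $71/120$. My first attempt will be the antipodal-pair/hexagon constraint described above. If that does not suffice, I will enlarge the configuration with the midpoints $\pm\boldsymbol v_i$ and redo the LP.
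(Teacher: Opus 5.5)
Your reduction to $d=4$ via Lemma~\ref{lem:monotonicity} and the bound $\Pr[N=12]>5/6$ are fine. The mechanism you propose for getting below $3/5$, however, cannot work.

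Every constraint you list (arc count, $2$-cycle count, per-triangle hexagon counts, the $m_2=2/3$ bound on hexagons) is a statistic of the single random digraph on one copy of $\mathcal V_5$. The hexagon constraint adds no information. A directed-triangle-free digraph on $3$ vertices has at most $4$ arcs, and two $2$-cycles on a triangle force the third pair to be empty, so it is already implied by triangle-freeness.

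Hence the point mass on one extremal isomorphism type satisfies all your constraints with $\sigma=3/5$. Examples are the doubly oriented $K_{2,3}$, or the transitive tournament on $[5]$ plus the arcs $2\to1$ and $5\to4$. The $2$-cycle count only introduces the extra unknown $\sigma_3(A\cap -A)$, which nothing else in your system controls. So the LP cannot close below $3/5$, let alone exactly at $71/120$.

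Adding the points $\pm\boldsymbol v_i$ does not help. They have inner products $\pm\frac14$ with each other and $0,\pm\sqrt{5/8}$ with the $\boldsymbol e_{ij}$, never $-\frac12$, so they lie in no zero-sum triple. Also, adding any arc inside a part of the doubly oriented $K_{2,3}$ creates a directed triangle, so the ``relatives'' you describe do not exist.

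The missing idea is a configuration in which many copies of $\mathcal V_5$ overlap, so that tightness in one copy constrains the others. The paper uses the $120$ roots $\Phi\subseteq\mathbb S^3$ of $H_4$. Its $60$ lines carry $60$ $A_4$-subsystems, each line lying in $10$ of them. Lemma~\ref{lem:directed-mantel} and double counting then give $|S|\le 72=\frac35\cdot120$ for zero-sum-free $S\subseteq\Phi$.

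If $|S|=72$, every subsystem is tight, i.e.\ $B^\top z=0$ with $z=w-\frac65\boldsymbol{1}$. The identity $BB^\top=(-A_\Gamma^3+25A_\Gamma^2+80A_\Gamma)/36$ and the spectrum $\{20,5,0,-4\}$ of $A_\Gamma$ force $A_\Gamma z=0$. So every line has neighbourhood weight $24$, which gives $e_D=e_Z+4b+24$.

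On the other hand, an edge of $\Gamma$ inside $D$ must be completed to an $A_2$-subsystem by a line of $Z$, and the completion pairs up the neighbours of each line. This gives $e_D\le e_{DZ}/2=e_Z+2b$. The contradiction yields $|S|\le 71$, and Lemma~\ref{lem:finite-averaging} gives $71/120$. The denominator is $|\Phi|$, not $|S_5|$.
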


For \(d=4,5\), this improves \(\frac{3}{5}\) to
\(\frac{71}{120}\). 
We expect that this method can also yield analogous improvements over the bound of Theorem~\ref{thm:main} in other
low-dimensional cases.

\textbf{Note added.} After the first version of this draft appears in arXiv on July 7, the authors realized that the same result in Theorem~\ref{thm:main} was obtained by D\'{u}cz~\cite{2026ducz}, and by Tran~\cite{TuanPC}.

\section{Proof of Theorem~\ref{thm:main}}
We shall use a directed version of Mantel's theorem. Mantel's theorem, one
of the earliest results in extremal graph theory~\cite{1907Mantel}, asserts
that an \(n\)-vertex triangle-free graph has at most \(\lfloor \frac{n^{2}}{4}\rfloor\)
edges. The following analogue for digraphs was proved by Brown and
Harary~\cite{1970BH}. Throughout, by a digraph we mean a directed graph with
no loops, however, we allow both arcs \(uv\) and \(vu\) to be present. A
\emph{directed triangle} means a cyclically oriented triangle on three distinct
vertices \(u,v,w\) and with three arcs \(u\to v\), \(v\to w\), and \(w\to u\).

\begin{lemma}[\cite{1970BH}]\label{lem:directed-mantel}
Let \(D\) be an \(n\)-vertex digraph. If \(D\) contains no directed
triangle, then
\(
    e(D)\le \left\lfloor \frac{n^2}{2}\right\rfloor .
\)
\end{lemma}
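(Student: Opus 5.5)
We prove the Brown--Harary bound by induction on \(n\), removing two vertices at a time. For \(n\le 1\) there are no arcs. For \(n=2\) there are at most \(2=\lfloor 4/2\rfloor\) arcs, since both \(uv\) and \(vu\) may be present. For \(n\ge 3\) the arithmetic needed is the identity
\[
    \left\lfloor \frac{n^2}{2}\right\rfloor
    =\left\lfloor \frac{(n-2)^2}{2}\right\rfloor+2(n-2)+2 .
\]
It holds because \(n^2-(n-2)^2=4n-4\) is even, so the floors shift by exactly \(2n-2\). It therefore suffices to find two vertices \(u,v\) that together account for at most \(2(n-2)+2\) arcs, counting arcs between \(\{u,v\}\) and \(V\setminus\{u,v\}\) together with the arcs between \(u\) and \(v\). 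Deleting \(u,v\) leaves a digraph on \(n-2\) vertices with no directed triangle, and induction finishes the proof.

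\textbf{Case 1: \(D\) has a pair \(u,v\) with both \(u\to v\) and \(v\to u\).} Consider any third vertex \(w\). If \(u\to w\) and \(w\to v\), then \(u\to w\to v\to u\) is a directed triangle. Symmetrically, \(v\to w\) and \(w\to u\) give the triangle \(v\to w\to u\to v\). So among the four possible arcs between \(w\) and \(\{u,v\}\) we cannot have \(\{uw,wv\}\) both present, nor \(\{vw,wu\}\) both present. These two forbidden pairs are disjoint, so at most \(2\) of the four arcs occur. Summing over the \(n-2\) choices of \(w\) gives at most \(2(n-2)\) arcs, plus the \(2\) arcs between \(u\) and \(v\). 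This is exactly the required bound.

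\textbf{Case 2: \(D\) has no such pair, i.e.\ \(D\) is an oriented graph.} Then \(e(D)\le\binom n2\le\lfloor n^2/2\rfloor\) trivially, and we are done without induction. (Case 1 is the only case needing care. One could also note that a sparser digraph trivially satisfies the bound, but the dichotomy above already covers everything.)

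The main point to check is the local counting in Case 1: the two forbidden arc pairs at each \(w\) must be disjoint and must cover all ways of having \(3\) or \(4\) arcs. Any set of three of the four arcs \(uw,wu,vw,wv\) contains one of the forbidden pairs. Indeed, omitting one arc leaves the other forbidden pair intact, because each arc lies in exactly one forbidden pair. So at most \(2\) arcs occur per \(w\), and the induction closes.
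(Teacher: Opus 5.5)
Your proof is correct. The paper does not prove this lemma; it quotes it from Brown and Harary~\cite{1970BH} and uses it as a black box, so your argument is a self-contained substitute rather than a variant of something in the text.

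Your argument is the digraph analogue of the classical edge-deletion induction for Mantel's theorem, with a \(2\)-cycle playing the role of the deleted edge. The local fact you isolate is correct and suffices. If \(u\to v\) and \(v\to u\) are both present, each further vertex \(w\) sends at most two of the four possible arcs to \(\{u,v\}\), because \(\{u\to w,\,w\to v\}\) and \(\{v\to w,\,w\to u\}\) are disjoint forbidden pairs that together cover all four arcs. Combined with the exact identity \(\lfloor n^2/2\rfloor=\lfloor (n-2)^2/2\rfloor+2n-2\), this closes the induction. The oriented case is handled by \(\binom n2\le\lfloor n^2/2\rfloor\), just as Mantel's induction dispatches the edgeless case trivially.

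The citation gives the paper brevity. Your argument makes Theorem~\ref{thm:main} and the bound \eqref{eq:H4-C-bound} fully self-contained at the cost of a few lines. It also gives some structural information for free: for \(n\ge 2\) one has \(\binom n2<\lfloor n^2/2\rfloor\), so any extremal digraph must contain a \(2\)-cycle, and equality must propagate through the induction.

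Two minor remarks. Your separate base case \(n=2\) is already covered by Cases 1 and 2 with \(n-2=0\), so only \(n\le 1\) is needed. The parenthetical about sparser digraphs in Case 2 can simply be dropped.
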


For every integer \(q\ge 2\), let
\[
    H_q
    =
    \left\{
        \boldsymbol{x}=(x_1,\ldots,x_q)\in\mathbb R^q:
        \sum_{i=1}^q x_i=0
    \right\},
\]
and define
\[
    \mathcal V_q
    =
    \left\{
        \frac{\boldsymbol e_i-\boldsymbol e_j}{\sqrt2}:
        1\le i,j\le q,\ i\ne j
    \right\}
    \subseteq H_q,
\]
where \(\boldsymbol e_1,\ldots,\boldsymbol e_q\) are the standard basis
vectors of \(\mathbb R^q\).  Thus \(\mathcal V_q\) consists of
\(q(q-1)\) unit roots lying on \(\binom q2\) antipodal lines, it is the
normalized \(A_{q-1}\) root configuration. 

We now fix a measurable set \(A\subseteq \mathbb S^{d-1}\) satisfying the
hypothesis of the theorem.  Let \(n=d+1\) and set \(H=H_n\).  Clearly
\(H\) is a \(d\)-dimensional subspace of \(\mathbb R^n\).  After
identifying \(H\) isometrically with \(\mathbb R^d\), we regard
\(\mathcal V_n\) as a subset of \(\mathbb S^{d-1}\).

For \(i\ne j\), write
\(
    \boldsymbol v_{ij}
    :=
    \frac{\boldsymbol e_i-\boldsymbol e_j}{\sqrt2}.
\)
Thus
\[
    \mathcal V_n
    =
    \{\boldsymbol v_{ij}:1\le i,j\le n,\ i\ne j\},\]
and clearly \(
    |\mathcal V_n|=n(n-1)=d(d+1).
\)

Our key observation is that zero-sum triples in \(\mathcal{V}_n\) can be
encoded by directed triangles. Let \(\vec{K}_{n}\) be the complete digraph
on vertex set \([n]\), with one arc \(i\to j\) for every ordered pair
\(i\ne j\).  We identify the vector \(\boldsymbol{v}_{ij}\) with the arc
\(i\to j\) of \(\vec{K}_{n}\).

\begin{claim}\label{claim:zero-sum-directed-triangle}
Three vectors in \(\mathcal{V}_n\) have sum \(\boldsymbol{0}\)
if and only if the corresponding three arcs of \(\vec K_{n}\) form a
directed triangle.
\end{claim}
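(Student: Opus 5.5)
Both directions reduce to comparing coefficients. Multiplying by \(\sqrt2\), the condition \(\boldsymbol v_{ab}+\boldsymbol v_{cd}+\boldsymbol v_{ef}=\boldsymbol 0\) becomes
\[
(\boldsymbol e_a-\boldsymbol e_b)+(\boldsymbol e_c-\boldsymbol e_d)+(\boldsymbol e_e-\boldsymbol e_f)=\boldsymbol 0 .
\]
Since \(\boldsymbol e_1,\ldots,\boldsymbol e_n\) are linearly independent, this holds exactly when the multiset of tails \(\{a,c,e\}\) equals the multiset of heads \(\{b,d,f\}\). In graph language, the three arcs \(a\to b\), \(c\to d\), \(e\to f\) have the same in-degree and out-degree at every vertex of \([n]\), counted with multiplicity.

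\emph{The "if" direction.} A directed triangle \(u\to v\), \(v\to w\), \(w\to u\) has tail multiset \(\{u,v,w\}\) and head multiset \(\{v,w,u\}\). These agree, so the sum vanishes. Directly, \((\boldsymbol e_u-\boldsymbol e_v)+(\boldsymbol e_v-\boldsymbol e_w)+(\boldsymbol e_w-\boldsymbol e_u)=\boldsymbol 0\).

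\emph{The "only if" direction.} Take three vectors whose sum is zero, so the tail and head multisets coincide.
\begin{itemize}
\item First I rule out repeated vectors. If two of the arcs coincide, say both are \(a\to b\), then the third vector equals \(-2\boldsymbol v_{ab}\). This has norm \(2\), which is impossible for a unit vector.
\item Also, \(\boldsymbol v_{ab}+\boldsymbol v_{ba}=\boldsymbol 0\), so an antipodal pair would force the third vector to be \(\boldsymbol 0\). Hence no two of the arcs are reverse to each other.
\item Now view the three arcs as a multidigraph on \([n]\) in which every vertex has in-degree equal to out-degree, with \(3\) arcs in total. It is Eulerian, so its arc set decomposes into directed closed walks.
\item Loops are excluded since \(i\ne j\). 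A closed walk of length \(2\) is a pair \(a\to b\), \(b\to a\), already ruled out. A closed walk of length \(1\) is impossible. So the three arcs form a single directed closed walk of length \(3\).
\item Such a walk \(u\to v\to w\to u\) with no loops has \(u\ne v\), \(v\ne w\), \(w\ne u\). Thus it is a directed triangle on three distinct vertices.
\end{itemize}

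The main point needing care is the interpretation of "three vectors": whether they must be distinct, and whether the arcs must lie on distinct vertices. The norm argument disposes of repetitions, and the closed-walk decomposition gives distinctness of the vertices, so no subtlety remains. The rest is routine bookkeeping of coefficients.
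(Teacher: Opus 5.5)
Your proof is correct and follows the paper's route: you compare coefficients to get a balanced, loopless three-arc multidigraph and show it must be a directed triangle. The only difference is that you invoke the Eulerian decomposition into closed walks, whereas the paper traces the walk $u\to v\to w$ by hand and excludes $w=u$ by deleting the resulting $2$-cycle; your norm arguments excluding repeated and reversed arcs are harmless but redundant, since any split of three loopless arcs into two or more closed walks would require a closed walk of length $1$, i.e.\ a loop.
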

\begin{poc}
First, if \(u,v,w\) are distinct, then
\[
    \boldsymbol{v}_{uv}+\boldsymbol{v}_{vw}+\boldsymbol{v}_{wu}
    =
    \frac{\boldsymbol{e}_{u}-\boldsymbol{e}_{v}
    +\boldsymbol{e}_{v}-\boldsymbol{e}_{w}
    +\boldsymbol{e}_{w}-\boldsymbol{e}_{u}}{\sqrt2}
    =
    \boldsymbol{0}.
\]
Thus every directed triangle \(u\to v\to w\to u\) gives a zero-sum
triple in \(\mathcal{V}_n\).

Conversely, suppose that three vectors from \(\mathcal{V}_n\) have sum
\(\boldsymbol{0}\).  Write the corresponding arcs as \(a_{r}\to b_{r}\) for \(r\in\{1,2,3\}\).  Then
\(\sum_{r=1}^{3}\boldsymbol{v}_{a_{r}b_{r}}=\boldsymbol{0}\), so after
multiplying by \(\sqrt2\) we get
\(
\boldsymbol{e}_{a_{1}}+\boldsymbol{e}_{a_{2}}+\boldsymbol{e}_{a_{3}} =\boldsymbol{e}_{b_{1}}+\boldsymbol{e}_{b_{2}}+\boldsymbol{e}_{b_{3}}.
\)
Comparing the coefficient of each basis vector \(\boldsymbol{e}_{t}\), we
see that, among these three arcs, the number of times \(t\) appears as a
tail is equal to the number of times \(t\) appears as a head.  Thus the
three arcs form a balanced directed multigraph with no loops.

Choose one of the arcs and call it \(u\to v\).  Since \(v\) has an
incoming arc, balance gives an outgoing arc from \(v\), because there are
no loops, this outgoing arc is different from \(u\to v\).  Write it as
\(v\to w\).  If \(w=u\), then \(u\to v\) and \(v\to u\) form a balanced
directed cycle of length \(2\). Removing this \(2\)-cycle would leave a single
loopless arc which is still balanced, impossible.  Hence \(w\ne u\).
Now \(w\) has an incoming arc and no outgoing arc among the first two
arcs, so balance forces the last arc to leave \(w\). Similarly, \(u\)
has an outgoing arc and no incoming arc among the first two arcs, so
balance forces the last arc to enter \(u\).  Therefore the last arc is
\(w\to u\), and the three arcs form the directed triangle
\(u\to v\to w\to u\).
\end{poc}

We now average over rotations of the whole configuration. We write
\[
    \SO(d):=\{R\in\mathbb R^{d\times d}:R^{\top}R=I,\det{R}=1\}
\]
for the group of orientation preserving linear isometries of
\(\mathbb R^{d}\). This is a compact group, so it has a normalized
rotation invariant probability measure, usually called Haar measure. Choosing
\(R\) uniformly from \(\SO(d)\) means choosing \(R\) according to this
measure, in other words, one can think of choosing a random
orientation preserving orthonormal coordinate frame. We shall take advantage of the following fact: for every fixed \(\boldsymbol{v}\in\mathbb S^{d-1}\), the point
\(R(\boldsymbol{v})\) is distributed according to \(\sigma_{d-1}\). This
follows from rotation invariance and the transitivity of the action of
\(\SO(d)\) on \(\mathbb S^{d-1}\).

For each rotation \(R\in \SO(d)\), define a spanning subdigraph
\(D_R\) of \(\vec K_n\) by
\(
    E(D_R):=\{\, i\to j : R(\boldsymbol v_{ij})\in A \,\}.
\)
The zero-sum-free assumption on \(A\) implies the following key property.
\begin{claim}\label{claim:trianglefree}
For any \(R\in \SO(d)\), the digraph \(D_{R}\) contains no directed triangle.
\end{claim}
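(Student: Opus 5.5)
The plan is to argue by contradiction, using Claim~\ref{claim:zero-sum-directed-triangle} together with the linearity of rotations. Suppose that for some \(R\in\SO(d)\) the digraph \(D_R\) contains a directed triangle \(u\to v\to w\to u\) on three distinct vertices \(u,v,w\in[n]\). By the definition of \(E(D_R)\), the three points \(R(\boldsymbol v_{uv})\), \(R(\boldsymbol v_{vw})\), \(R(\boldsymbol v_{wu})\) all lie in \(A\). The goal is then to show that these three points form a zero-sum triple, which contradicts the hypothesis on \(A\).

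The key computation goes as follows.
\begin{enumerate}[(i)]
\item The forward direction of Claim~\ref{claim:zero-sum-directed-triangle}, or just the direct telescoping computation given there, yields
\[
\boldsymbol v_{uv}+\boldsymbol v_{vw}+\boldsymbol v_{wu}=\boldsymbol 0
\]
in \(H\). This identity is preserved under the isometric identification \(H\cong\mathbb R^d\), because that identification is linear.
\item Since \(R\) is linear, it follows that
\[
R(\boldsymbol v_{uv})+R(\boldsymbol v_{vw})+R(\boldsymbol v_{wu})=R(\boldsymbol 0)=\boldsymbol 0 .
\]
\item Since \(R\) is an isometry, the three image points lie on \(\mathbb S^{d-1}\). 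They are also in \(A\), so \(A\) contains a zero-sum triple.
\end{enumerate}
This contradicts the assumption that \(A\) is zero-sum-free, so \(D_R\) has no directed triangle.

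There is no real obstacle here. The one point to check is that the definition of zero-sum-free requires nothing like distinctness of the triple, so any three points of \(A\) summing to zero already give the contradiction. In our case the three points are in any event distinct, since the arcs are distinct and \(R\) is injective. Only the easy direction of Claim~\ref{claim:zero-sum-directed-triangle} is needed. Its converse, and Lemma~\ref{lem:directed-mantel}, will be used afterwards when we average \(e(D_R)\) over \(R\) to bound \(\sigma_{d-1}(A)\).
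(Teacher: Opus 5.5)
Your proposal is correct and is essentially the paper's proof: assume a directed triangle in \(D_R\), note that the three rotated vectors lie in \(A\), and use linearity of \(R\) together with the telescoping identity \(\boldsymbol v_{uv}+\boldsymbol v_{vw}+\boldsymbol v_{wu}=\boldsymbol 0\) to obtain a zero-sum triple in \(A\). Your extra remarks, that the identification \(H\cong\mathbb R^d\) is linear and that distinctness of the triple is not needed, are accurate but not required.
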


\begin{poc}
Suppose that \(i\to j\to k\to i\) forms a directed
triangle in \(D_{R}\). By the definition of \(D_{R}\), the three rotated
vectors
\(
    R(\boldsymbol{v}_{ij}),\) \(
    R(\boldsymbol{v}_{jk})\) and
    \(R(\boldsymbol{v}_{ki})
\)
all belong to \(A\). On the other hand, these three vectors have sum
\(\boldsymbol{0}\). Indeed, since \(R\) is linear,
\[
    R(\boldsymbol{v}_{ij})+R(\boldsymbol{v}_{jk})+R(\boldsymbol{v}_{ki})
    =
    R(\boldsymbol{v}_{ij}+\boldsymbol{v}_{jk}+\boldsymbol{v}_{ki})
    =
    R(\boldsymbol{0})
    =
    \boldsymbol{0}.
\]
This contradicts the assumption that \(A\) contains no three points whose
sum is \(\boldsymbol{0}\).
\end{poc}

  By Claim~\ref{claim:trianglefree} and Lemma~\ref{lem:directed-mantel}, for every rotation \(R\in\SO(d)\) we have
\(
    e(D_{R})\le \left\lfloor \frac{n^{2}}{2}\right\rfloor .
\)
By the definition of \(D_{R}\),
\(
    e(D_{R})
    =
    \sum_{i\ne j} \boldsymbol{1}_{A}\bigl(R(\boldsymbol{v}_{ij})\bigr).
\)
Therefore we have
\[
    \mathbb E_{R} \left[ e(D_{R})\right]
    =
    \sum_{i\ne j}\mathbb P_{R}\bigl(R(\boldsymbol{v}_{ij})\in A\bigr),
\]
where \(R\) is chosen uniformly from \(\SO(d)\). For each fixed \(i\ne j\),
the vector \(\boldsymbol{v}_{ij}\) has norm \(1\), and a uniformly random
rotation sends it to a uniformly distributed point on \(\mathbb S^{d-1}\).
Hence
\[
    \mathbb P_{R}\bigl(R(\boldsymbol{v}_{ij})\in A\bigr)
    =
    \sigma_{d-1}(A).
\]
Since there are \(n(n-1)\) ordered pairs \((i,j)\) with \(i\ne j\), by Lemma~\ref{lem:directed-mantel}, we get
\[
    \mathbb E_{R} \left[e(D_{R})\right]=n(n-1)\sigma_{d-1}(A)\le
    \left\lfloor \frac{n^{2}}{2}\right\rfloor .
\]
Since \(n=d+1\), it follows that
\[
    \sigma_{d-1}(A)
    \le
    \frac{\lfloor n^{2}/2\rfloor}{n(n-1)}
    =
    \frac{\lfloor (d+1)^{2}/2\rfloor}{d(d+1)}.
\]
This finishes the proof.

\section{Stability: improvement when \texorpdfstring{\(d=4\)}{d=4}}
\label{subsec:d4-stability}
We first show two simple properties related to the problem.
\begin{lemma}\label{lem:finite-averaging}
Let \(F\subseteq\mathbb S^{d-1}\) be finite, and suppose that every
zero-sum-free subset of \(F\) has size at most \(M\).  Then every
measurable zero-sum-free set \(A\subseteq\mathbb S^{d-1}\) satisfies
\(
    \sigma_{d-1}(A)\le M/|F|.
\)
\end{lemma}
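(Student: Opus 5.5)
We will prove Lemma~\ref{lem:finite-averaging} by the same rotation-averaging argument as in the proof of Theorem~\ref{thm:main}, with the finite set \(F\) playing the role of \(\mathcal V_n\) and the hypothesis on \(F\) replacing Lemma~\ref{lem:directed-mantel}.

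Fix a measurable zero-sum-free set \(A\subseteq\mathbb S^{d-1}\). For each rotation \(R\in\SO(d)\), consider
\[
    F_R:=\{\boldsymbol f\in F:\ R(\boldsymbol f)\in A\}\subseteq F .
\]
We first check that \(F_R\) is zero-sum-free. Suppose \(\boldsymbol x,\boldsymbol y,\boldsymbol z\in F_R\) satisfy \(\boldsymbol x+\boldsymbol y+\boldsymbol z=\boldsymbol 0\). By linearity of \(R\),
\[
    R(\boldsymbol x)+R(\boldsymbol y)+R(\boldsymbol z)=\boldsymbol 0,
\]
and all three of these points lie in \(A\). This contradicts the assumption on \(A\). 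Note that the definition of zero-sum-free in the introduction does not require distinct points. Since \(R\) is injective, repeated points in \(F_R\) map to repeated points in \(A\), so the transfer is exact and no distinctness issue arises. Hence \(|F_R|\le M\) for every \(R\).

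Next we write
\[
    |F_R|=\sum_{\boldsymbol f\in F}\boldsymbol 1_A\bigl(R(\boldsymbol f)\bigr).
\]
This is a finite sum of measurable functions of \(R\), because \(A\) is measurable and \(R\mapsto R(\boldsymbol f)\) is continuous. We take the expectation with \(R\) drawn from Haar measure on \(\SO(d)\). As noted before Claim~\ref{claim:trianglefree}, for each fixed unit vector \(\boldsymbol f\) the point \(R(\boldsymbol f)\) is distributed according to \(\sigma_{d-1}\). Therefore
\[
    |F|\,\sigma_{d-1}(A)=\mathbb E_R\bigl[|F_R|\bigr]\le M,
\]
which gives \(\sigma_{d-1}(A)\le M/|F|\).

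There is no real obstacle in this argument. The only points needing care are the measurability of \(R\mapsto \boldsymbol 1_A(R(\boldsymbol f))\), which allows the expectation to be taken, and the observation above that zero-sum-freeness passes from \(A\) to \(F_R\) even when the zero-sum triple uses repeated points.
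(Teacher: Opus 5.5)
Your proposal is correct and matches the paper's proof: the paper also sets \(S_R=\{\boldsymbol v\in F: R(\boldsymbol v)\in A\}\), notes that it is zero-sum-free by linearity of \(R\), and takes expectations using that \(R(\boldsymbol v)\) is \(\sigma_{d-1}\)-distributed. Your remarks on measurability and repeated points are harmless extras, and repeated points cannot occur anyway, since \(2\boldsymbol x+\boldsymbol z=\boldsymbol 0\) is impossible for unit vectors.
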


\begin{proof}[Proof of Lemma~\ref{lem:finite-averaging}]
Choose \(R\in\SO(d)\) uniformly at random and put
\(
    S_{R}=\{\boldsymbol{v}\in F:R(\boldsymbol{v})\in A\}.
\)
Since \(R\) preserves zero-sum triples, \(S_{R}\) is a zero-sum-free subset
of \(F\), and hence \(|S_{R}|\le M\).  On the other hand, each fixed
\(\boldsymbol{v}\in F\) is sent by a random rotation to a uniformly
distributed point of \(\mathbb S^{d-1}\).  Therefore
\(
    |F|\sigma_{d-1}(A)
    =
    \mathbb E_{R}|S_{R}|
    \le M,
\)
as required.
\end{proof}

The following lemma gives the monotonicity
\(m_{d}\le m_{d-1}\). 

\begin{lemma}\label{lem:monotonicity}
If \(m_{r}\le \beta\), then \(m_{d}\le \beta\) for every \(d\ge r\).
\end{lemma}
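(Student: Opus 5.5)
It suffices to prove \(m_{d}\le m_{d-1}\) for every \(d\ge 3\); the general statement then follows by induction on \(d\ge r\). To compare the two dimensions we will slice the sphere by great subspheres. Fix a measurable zero-sum-free set \(A\subseteq\mathbb S^{d-1}\) and an integer \(\varepsilon\)-free setup as follows: let \(W\) range over \((d-1)\)-dimensional linear subspaces of \(\mathbb R^{d}\), chosen according to the \(\SO(d)\)-invariant probability measure. Equivalently, take a fixed hyperplane \(W_0=\boldsymbol e_d^{\perp}\) and set \(W=R(W_0)\) with \(R\) Haar-random in \(\SO(d)\). For each \(R\), define
\[
    A_R:=\{\boldsymbol x\in \mathbb S^{d-1}\cap W_0: R(\boldsymbol x)\in A\}\subseteq \mathbb S^{d-2},
\]
identifying \(\mathbb S^{d-1}\cap W_0\) with \(\mathbb S^{d-2}\).

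\textbf{Step 1: each slice is zero-sum-free.} If \(\boldsymbol x,\boldsymbol y,\boldsymbol z\in A_R\) have \(\boldsymbol x+\boldsymbol y+\boldsymbol z=\boldsymbol 0\), then by linearity \(R\boldsymbol x+R\boldsymbol y+R\boldsymbol z=\boldsymbol 0\), and these three points lie in \(A\). This contradicts the hypothesis on \(A\). Hence \(A_R\) is zero-sum-free in \(\mathbb S^{d-2}\). Once measurability is checked, this gives \(\sigma_{d-2}(A_R)\le m_{d-1}\le\beta\) for every such \(R\).

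\textbf{Step 2: averaging.} By Fubini on \(\SO(d)\times(\mathbb S^{d-1}\cap W_0)\) with the product of Haar and \(\sigma_{d-2}\),
\[
    \mathbb E_R\,\sigma_{d-2}(A_R)=\int_{\mathbb S^{d-2}}\mathbb P_R\bigl(R(\boldsymbol x)\in A\bigr)\,d\sigma_{d-2}(\boldsymbol x)=\sigma_{d-1}(A).
\]
The inner probability equals \(\sigma_{d-1}(A)\) for each fixed unit vector \(\boldsymbol x\), exactly as in the proof of Theorem~\ref{thm:main}. Combining this with Step 1 yields \(\sigma_{d-1}(A)\le\beta\). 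Taking the supremum over \(A\) gives \(m_d\le\beta\).

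\textbf{Main obstacle: measurability.} The map \((R,\boldsymbol x)\mapsto \boldsymbol 1_A(R\boldsymbol x)\) is the composition of the Borel (indeed continuous) map \((R,\boldsymbol x)\mapsto R\boldsymbol x\) with \(\boldsymbol 1_A\). If \(A\) is Lebesgue-measurable rather than Borel, we replace \(A\) by a Borel subset \(A'\subseteq A\) of equal measure. This is harmless: \(A'\) is still zero-sum-free, and the pushforward of the product measure under \((R,\boldsymbol x)\mapsto R\boldsymbol x\) is \(\sigma_{d-1}\), so null sets pull back to null sets. Fubini then also shows that \(A_R\) is \(\sigma_{d-2}\)-measurable for almost every \(R\). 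The bound in Step 1 is only needed for these \(R\), since the expectation ignores a null set, so the argument goes through.
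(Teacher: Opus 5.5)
Your proof is correct and is essentially the paper's argument. The paper averages \(\sigma_U(A\cap U\cap\mathbb S^{d-1})\) over a random \(r\)-dimensional subspace \(U\) in one step, whereas you slice by random hyperplanes and induct; your parametrization \(W=R(W_0)\) would work just as well with an \(r\)-dimensional \(W_0\), so the induction is optional. Your explicit Fubini identity and Borel-replacement remark also supply the measurability details that the paper's one-line averaging (which asserts the bound ``for every \(U\)'') leaves implicit.
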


\begin{proof}[Proof of Lemma~\ref{lem:monotonicity}]
Let \(A\subseteq\mathbb S^{d-1}\) be measurable and zero-sum-free.  Choose
an \(r\)-dimensional linear subspace \(U\subseteq\mathbb R^d\) uniformly
from the Grassmannian, and let \(\sigma_{U}\) be the normalized surface
measure on \(\mathbb S^{d-1}\cap U\).  The intersection
\(A\cap U\cap\mathbb S^{d-1}\) is still zero-sum-free on the sphere
\(\mathbb S^{d-1}\cap U\), so \(\sigma_{U}(A\cap U\cap\mathbb S^{d-1})\le
\beta\) for every \(U\).  Averaging over \(U\), and using rotational
invariance, we have
\[
    \sigma_{d-1}(A)
    =
    \mathbb E_{U}\sigma_{U}(A\cap U\cap\mathbb S^{d-1})
    \le \beta.
\]
This finishes the proof.
\end{proof}
When \(d=4\), the configuration used in
Theorem~\ref{thm:main} has \(20\) points and gives the bound \(12/20=3/5\). The following
argument shows that equality in this bound cannot occur. 

Let \(\varphi=\frac{1+\sqrt{5}}{2}\).  We use the following normalized system in \(\mathbb R^{4}\). Define
\[
    \Phi_{1}
    =
    \{(\pm1,0,0,0),\ (0,\pm1,0,0),\ (0,0,\pm1,0),\ (0,0,0,\pm1)\},
\]
\[
    \Phi_{2}
    =
    \left\{\frac{1}{2}(\varepsilon_{1},\varepsilon_{2},\varepsilon_{3},\varepsilon_{4}):
    \varepsilon_{i}\in\{\pm1\}\right\},
\]
and
\[
    \Phi_{3}
    =
    \left\{
        \frac{1}{2}\tau(0,\varepsilon_{1},\varepsilon_{2}\varphi,
        \varepsilon_{3}\varphi^{-1}):
        \tau\in\operatorname{Alt}_{4},\ 
        \varepsilon_{1},\varepsilon_{2},\varepsilon_{3}\in\{\pm1\}
    \right\}.
\]
Here \(\operatorname{Alt}_{4}\) denotes the group of even permutations of the
four coordinates.  Set \(\Phi=\Phi_{1}\cup\Phi_{2}\cup\Phi_{3}\).  The three
parts have sizes \(8,16,96\), respectively, and they are disjoint by their
zero-coordinate patterns.  For \(\Phi_{3}\), there are no repetitions:
the four absolute coordinate values
\(0,1/2,\varphi/2,\varphi^{-1}/2\) are distinct, so the position of each
absolute value and the three signs are determined by the vector.  Since
\(1+\varphi^{2}+\varphi^{-2}=4\), every point of \(\Phi\) has norm \(1\).
Thus \(\Phi\subseteq\mathbb S^{3}\) and \(|\Phi|=120\).  This is the
\emph{normalized \(H_{4}\) root configuration}, but below we only use the explicit
coordinates just given and the finite verifications in
Appendix~\ref{app:H4-counts}.

It is useful to pass to antipodal lines.  Let
\[
    \mathcal L=\{\{\boldsymbol{x},-\boldsymbol{x}\}:\boldsymbol{x}\in\Phi\},
\]
then \(|\mathcal L|=60\).  For \(L,M\in\mathcal L\), write
\(|\langle L,M\rangle|=|\boldsymbol{x}\cdot\boldsymbol{y}|\), where
\(\boldsymbol{x}\in L\) and \(\boldsymbol{y}\in M\).  This is independent
of the choices of signs.  Let \(\Gamma\) be the graph on \(\mathcal L\)
in which two distinct lines \(L,M\) are adjacent when
\(|\langle L,M\rangle|=1/2\), and let \(A_{\Gamma}\) be its adjacency matrix.

Let
\[
    \mathcal V_{5}
    =
    \left\{
        \frac{\boldsymbol{e}_{i}-\boldsymbol{e}_{j}}{\sqrt{2}}:
        1\le i,j\le5,\ i\ne j
    \right\}.
\]
We call a set \(C\subseteq\mathcal L\) an \emph{\(A_{4}\)-subsystem} if
\(|C|=10\) and the \(20\) roots lying on the lines of \(C\) are isometric
to \(\mathcal V_{5}\).  We write \(T(5)\) for the triangular graph whose
vertices are the two-element subsets of \([5]\), with two vertices
adjacent exactly when the corresponding subsets intersect. In particular, the induced subgraph of $\Gamma$  on \(C\) is isomorphic to \(T(5)\).
Let \(\mathcal C\) be the family of all such \(A_{4}\)-subsystems.  The
following finite facts about the line system will be verified in
Appendix~\ref{app:H4-counts}.

\begin{lemma}
\label{lem:H4-line-incidence}
The following statements hold.
\begin{enumerate}
    \item[\textup{(i)}] \(|\mathcal C|=60\), and every line
    \(L\in\mathcal L\) belongs to exactly \(10\) members of \(\mathcal C\).
    \item[\textup{(ii)}] The graph \(\Gamma\) is \(20\)-regular.
    \item[\textup{(iii)}] If \(B\) is the \(\mathcal L\times\mathcal C\)
    incidence matrix, defined by \(B_{L,C}=1\) when \(L\in C\) and
    \(B_{L,C}=0\) otherwise, then
    \begin{equation}\label{eq:H4-BBT}
        BB^\top=\frac{-A_{\Gamma}^{3}+25A_{\Gamma}^{2}+80A_{\Gamma}}{36}.
    \end{equation}
    \item[\textup{(iv)}] The adjacency matrix \(A_{\Gamma}\) satisfies
    \begin{equation}\label{eq:H4-minpoly}
        A_{\Gamma}(A_{\Gamma}-5I)(A_{\Gamma}+4I)(A_{\Gamma}-20I)=0.
    \end{equation}
\end{enumerate}
\end{lemma}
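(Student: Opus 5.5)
These are finite statements about an explicit 120-point configuration, so I would prove them by exploiting the symmetry of \(\Phi\) to cut the work down, and check the residual finite facts either by hand via the coordinates or by machine, as the appendix reference suggests.

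\textbf{Step 1: inner products and \(\Gamma\).} First I would record the multiset of absolute inner products between distinct lines of \(\mathcal L\). The inner products between roots of \(\Phi\) lie in \(\{0,\pm\tfrac12,\pm\tfrac{\varphi}{2},\pm\tfrac{\varphi^{-1}}{2},\pm1\}\). Fixing the line through \((1,0,0,0)\), the other \(59\) lines split by \(|\langle L,M\rangle|\) according to the first coordinate of a representative. Here \(\tfrac12\) occurs for the \(8\) lines of \(\Phi_2\) (up to sign) together with \(12\) lines from \(\Phi_3\), which gives \(20\). This proves (ii) at that line. Since the symmetry group of \(\Phi\) (the \(H_4\) Coxeter group, or at least the group generated by even coordinate permutations, sign changes and the reflections in roots) acts transitively on \(\mathcal L\), regularity follows everywhere. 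To stay with the explicit coordinates, I would verify transitivity directly: reflections in roots of \(\Phi_2\) map \(\Phi_1\) into \(\Phi_2\), and similarly into \(\Phi_3\).

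\textbf{Step 2: counting \(A_4\)-subsystems.} An \(A_4\)-subsystem is determined by \(10\) lines whose pairwise absolute inner products are \(0\) or \(\tfrac12\), with the \(T(5)\) pattern, and whose roots span a rank-4 root system closed under zero-sum triples. I would count them as follows. Fix a line \(L\). Every \(C\ni L\) contains the \(6\) neighbours of \(L\) in \(T(5)\), which are \(\Gamma\)-neighbours of \(L\). In fact \(C\) is determined by two non-orthogonal roots at angle \(120^\circ\) together with a further generator. Counting the flags (line, subsystem) gives \(10|\mathcal C|=60\cdot k\), where \(k\) is the number of subsystems through a line, so it suffices to compute \(k=10\). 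I would find these \(10\) via the \(H_4\)-structure: the \(A_4\) subsystems correspond to cosets of the \(A_4\) Weyl group (order \(120\)) inside \(W(H_4)\) (order \(14400\)), modulo the normalizer, which gives \(60\). Alternatively I would simply enumerate them by computer using the explicit coordinates.

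\textbf{Step 3: the matrix identities (iii) and (iv).} For (iv), I would show that \(\Gamma\) lies in a symmetric association scheme coming from the distance-transitive action, so \(A_\Gamma\) has at most four eigenvalues. The degree \(20\) is one of them; the others, \(5,0,-4\), come from the spectrum, computed by the trace conditions \(\operatorname{tr}A=0\), \(\operatorname{tr}A^2=60\cdot20\), \(\operatorname{tr}A^3=6\cdot\#\text{triangles}\), together with integrality of the multiplicities. For (iii), \((BB^\top)_{L,M}\) is the number of subsystems containing both \(L\) and \(M\), which depends only on the orbit of the pair \((L,M)\), that is, on \(|\langle L,M\rangle|\in\{1,\tfrac12,\tfrac{\varphi}{2},\tfrac{\varphi^{-1}}{2},0\}\). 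I would compute these five numbers, together with the entries of \(A_\Gamma^2\) and \(A_\Gamma^3\) on each orbit class, and then check that the polynomial identity holds entrywise. Both sides live in the Bose--Mesner algebra, and in fact the identity is already forced on the diagonal and on the \(\tfrac12\)-class.

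\textbf{Main obstacle.} The hard step is justifying the orbit and counting data: that pair-orbits are classified by the absolute inner product, and the exact counts of subsystems through pairs. I would settle this by direct finite computation from the coordinates, which is presumably what the appendix does.
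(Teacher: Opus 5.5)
\textbf{The gap is in your argument for (iv).} Your reason for ``at most four eigenvalues'' is wrong, because \(\Gamma\) is not distance-transitive. It has diameter \(2\): every non-adjacent pair has a common neighbour. But an orthogonal pair of lines has \(8\) common neighbours, while a pair with \(|\langle L,M\rangle|=\varphi^{\pm1}/2\) has \(5\), so \(\Gamma\) is not even strongly regular. The natural scheme, with classes given by the values of \(|\langle L,M\rangle|\), has four nontrivial classes. So a priori \(A_\Gamma\) can have five distinct eigenvalues, and cutting this to four needs the extra observation that the \(\varphi/2\) and \(\varphi^{-1}/2\) classes behave identically.

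More seriously, even granting four eigenvalues, the trace data give only four equations in six unknowns \(\theta_1,\theta_2,\theta_3,m_1,m_2,m_3\): \(\sum m_i=59\), \(\sum m_i\theta_i=-20\), \(\sum m_i\theta_i^2=800\), \(\sum m_i\theta_i^3=400\). Appealing to integrality of the multiplicities does not single out \(\{5,0,-4\}\). You would have to exclude every other integral or quadratic-irrational solution, and eigenvalues in \(\mathbb Q(\sqrt5)\) are entirely plausible for graphs built from \(H_4\). Traces only see the diagonal of \(A_\Gamma^k\), whereas the minimal polynomial is an identity on every entry.

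\textbf{What is missing is the intersection numbers.} The paper computes the matrix \(P=(p_{ij})\), where \(p_{ij}\) counts the neighbours \(N\sim L\) with \(|\langle N,M\rangle|=R_j\), given that \(|\langle L,M\rangle|=R_i\). Iterating \(a_{k+1}=Pa_k\) from \(a_1=(0,1,0,0,0)^\top\) gives the class-wise entries of the powers of \(A_\Gamma\): \((20,7,8,5,5)\) for \(A_\Gamma^2\), \((140,147,128,125,125)\) for \(A_\Gamma^3\), and \((2940,2687,2688,2625,2625)\) for \(A_\Gamma^4\). Part (iv) is then the identity \(A_\Gamma^4-21A_\Gamma^3+400A_\Gamma=0\), checked class by class. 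These same walk counts are exactly what (iii) needs, so you should compute \(P\), or check (iv) directly on the \(60\times 60\) matrix, rather than rely on traces.

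Two smaller points. In (iii) the identity is not ``forced'' by the diagonal and the \(\tfrac12\)-class, since it has three free coefficients. The classes \(0\) and \(\varphi^{\pm1}/2\) are genuine checks: \((-128+25\cdot 8)/36=2\) and \((-125+25\cdot 5)/36=0\) must match the pair-incidence counts. In (i), the count \(14400/240=60\) presupposes that all \(A_4\)-subsystems are \(W(H_4)\)-conjugate. The paper gets this from the local picture: the \(15\) lines orthogonal to a fixed line contain exactly ten \(\Gamma\)-triangles, each extending uniquely to an \(A_4\)-subsystem. So the group-theoretic shortcut does not avoid the real computation.
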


We shall also use the following completion property.  An
\emph{\(A_{2}\)-subsystem} means three antipodal lines whose six roots are
isometric to the normalized \(A_{2}\) root system, that is, three lines
with representatives \(\boldsymbol{x},\boldsymbol{y},\boldsymbol{z}\)
satisfying \(\boldsymbol{x}+\boldsymbol{y}+\boldsymbol{z}=\boldsymbol{0}\).

\begin{lemma}\label{lem:H4-A2-completion}
Every edge of \(\Gamma\) has a unique completion to an \(A_{2}\)-subsystem.
Moreover, for each fixed line \(T\in\mathcal L\), this completion operation
pairs the \(20\) neighbors of \(T\) in \(\Gamma\) into \(10\) unordered
pairs.
\end{lemma}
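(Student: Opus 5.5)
The plan is to work directly with representatives. Fix an edge $\{L,M\}$ of $\Gamma$, and choose unit vectors $\boldsymbol{x}\in L$, $\boldsymbol{y}\in M$ with $\boldsymbol{x}\cdot\boldsymbol{y}=-\tfrac12$. Since $|\langle L,M\rangle|=1/2$, this is always possible after replacing $\boldsymbol{y}$ by $-\boldsymbol{y}$ if necessary. Set $\boldsymbol{z}:=-(\boldsymbol{x}+\boldsymbol{y})$, so that
\[
    |\boldsymbol{z}|^2=1+1+2(-\tfrac12)=1,
\]
and $\boldsymbol{z}\cdot\boldsymbol{x}=\boldsymbol{z}\cdot\boldsymbol{y}=-\tfrac12$.

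\textbf{Existence.} We must show $\boldsymbol{z}\in\Phi$. The intended route is the closure of the $H_4$ root configuration under reflections: $\Phi$ is invariant under $s_{\boldsymbol{y}}(\boldsymbol{v})=\boldsymbol{v}-2(\boldsymbol{v}\cdot\boldsymbol{y})\boldsymbol{y}$, and $s_{\boldsymbol{y}}(\boldsymbol{x})=\boldsymbol{x}+\boldsymbol{y}=-\boldsymbol{z}$. Because the paper promises to use only explicit coordinates, I will not cite root-system theory. Instead I will verify the needed closure in the same way as Appendix~\ref{app:H4-counts}, namely that whenever $\boldsymbol{x},\boldsymbol{y}\in\Phi$ have $\boldsymbol{x}\cdot\boldsymbol{y}=-\tfrac12$, the sum $\boldsymbol{x}+\boldsymbol{y}$ lies in $-\Phi=\Phi$. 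This can be organized by symmetry. The group generated by sign changes, the even coordinate permutations, and left multiplication by unit quaternions in $\Phi$ (viewing $\Phi$ as the binary icosahedral group) preserves $\Phi$ and acts transitively on it. Hence we may take $\boldsymbol{x}=(1,0,0,0)$. Then $\boldsymbol{y}$ has first coordinate $-\tfrac12$, and one checks directly from the coordinate lists that $\boldsymbol{x}+\boldsymbol{y}$, which has first coordinate $\tfrac12$ and the same remaining coordinates, is again in $\Phi$. In quaternion language, $\Phi$ is a group, $\boldsymbol{y}$ has real part $-\tfrac12$, and $1+\boldsymbol{y}=-\boldsymbol{y}^2\in\Phi$. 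That last identity is the cleanest argument, and I would present it via the quaternion group structure, which can be checked from the coordinates.

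\textbf{Uniqueness.} Suppose $\{L,M,N\}$ is an $A_2$-subsystem with representatives $\boldsymbol{x}',\boldsymbol{y}',\boldsymbol{z}'$ summing to zero. Then the pairwise inner products are all $-\tfrac12$. Up to a global sign we may take $\boldsymbol{x}'=\boldsymbol{x}$. Then $\boldsymbol{y}'=\pm\boldsymbol{y}$ with $\boldsymbol{x}\cdot\boldsymbol{y}'=-\tfrac12$, which forces $\boldsymbol{y}'=\boldsymbol{y}$. Therefore $\boldsymbol{z}'=\boldsymbol{z}$ and $N$ is determined. We also note $N\ne L,M$, since $\boldsymbol{z}\cdot\boldsymbol{x}=-\tfrac12\ne\pm1$.

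\textbf{Pairing.} Fix $T$ and a neighbor $M$, and let $N$ be the completion of the edge $\{T,M\}$. By construction $|\langle T,N\rangle|=|\langle M,N\rangle|=\tfrac12$, so $N$ is also a neighbor of $T$ with $N\ne M$. The triple $\{T,N,M\}$ is the $A_2$-subsystem containing the edge $\{T,N\}$, so by uniqueness the completion of $\{T,N\}$ is $M$. Thus $M\mapsto N$ is a fixed-point-free involution on the $20$ neighbors of $T$ (Lemma~\ref{lem:H4-line-incidence}(ii)), which pairs them into $10$ unordered pairs. The only step needing real care is the existence step, that is, closure of $\Phi$. I expect to handle it by the quaternion-group observation, falling back on the finite check of Appendix~\ref{app:H4-counts} if needed.
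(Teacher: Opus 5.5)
Your proof is correct. The uniqueness and pairing parts are essentially the paper's argument, and you are a little more explicit about normalizing signs and about why $N\ne L,M$. The genuine difference is in the existence step.

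\textbf{Paper's route.} The paper follows its stated policy of using only coordinates. It tabulates ten representative adjacent pairs $(\boldsymbol{x},\boldsymbol{y})$ with $\boldsymbol{x}\cdot\boldsymbol{y}=-\frac12$, up to sign changes and even coordinate permutations, and checks row by row that $-\boldsymbol{x}-\boldsymbol{y}\in\Phi$. This is elementary, but it depends on the assertion that these rows cover every orbit of adjacent pairs.

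\textbf{Your route.} You bring in the extra symmetry of left quaternion multiplication, which acts transitively on $\Phi$. This reduces everything to $\boldsymbol{x}=1$, where the check is just a sign flip of the first coordinate. Your identity $1+\boldsymbol{y}=-\boldsymbol{y}^2$ is right. In fact you do not need the reduction at all. Since $\bar{\boldsymbol{x}}\boldsymbol{y}$ is a unit quaternion with real part $\boldsymbol{x}\cdot\boldsymbol{y}=-\frac12$, you get $\boldsymbol{x}+\boldsymbol{y}=\boldsymbol{x}(1+\bar{\boldsymbol{x}}\boldsymbol{y})=-\boldsymbol{y}\bar{\boldsymbol{x}}\boldsymbol{y}$. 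This is exactly the reflection $s_{\boldsymbol{y}}(\boldsymbol{x})$, and it lies in $\Phi$ as soon as $\Phi$ is a group.

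\textbf{Trade-off.} Your approach replaces the case analysis with a single structural fact: $\Phi$ is closed under quaternion multiplication. Your proposal asserts this but does not verify it. Checking it from the coordinates is itself a finite computation, for instance on generators, so the saving comes only if you cite it. If you do cite it, there is a chirality issue. The paper's $\Phi_3$ consists of even permutations of $\frac12(0,\pm1,\pm\varphi,\pm\varphi^{-1})$. This is the mirror image of the usual icosian list, which is built from $(0,1,\varphi^{-1},\varphi)$, so a reference with standard coordinates does not apply verbatim.

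The conclusion still holds. Swapping the $j$ and $k$ coordinates, with $1$ and $i$ fixed, is an anti-automorphism of $\mathbb{H}$, so it maps subgroups of the unit quaternions to subgroups. Since $\mathrm{Alt}_4$ is normal in $S_4$, this swap carries the standard icosians onto the paper's $\Phi$. You should state this explicitly, or verify closure of the paper's $\Phi$ directly.
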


Lemma~\ref{lem:H4-A2-completion} will be proved from the coordinates of \(\Phi\) in
Lemma~\ref{lem:app-H4-A2-completion}. We now prove the finite bound which drives the measure estimate.

\begin{lemma}\label{lem:H4-independent}
Let \(S\subseteq\Phi\).  Suppose that there are no
\(\boldsymbol{x},\boldsymbol{y},\boldsymbol{z}\in S\) such that
\(\boldsymbol{x}+\boldsymbol{y}+\boldsymbol{z}=\boldsymbol{0}\).  Then
\(
    |S|\le 71.
\)
\end{lemma}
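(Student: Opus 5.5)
We will combine a per-subsystem counting bound with a spectral (quadratic-form) estimate built from Lemma~\ref{lem:H4-line-incidence}. For each line \(L\in\mathcal L\), let \(s_L=|S\cap L|\in\{0,1,2\}\). Applying Lemma~\ref{lem:directed-mantel} to each \(A_4\)-subsystem \(C\in\mathcal C\) (exactly as in the proof of Theorem~\ref{thm:main}, with \(n=5\)) gives \(\sum_{L\in C}s_L\le 12\). Summing over \(\mathcal C\) and using part (i) (each line lies in exactly \(10\) members) recovers \(10|S|\le 60\cdot 12\), that is, \(|S|\le 72\). So the task is to rule out \(|S|=72\). That would force every \(A_4\)-subsystem to be \emph{extremal}, i.e.\ \(\sum_{L\in C}s_L=12\) for all \(C\in\mathcal C\).

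\textbf{Step 1: local structure.} First we analyse the equality case of directed Mantel for \(n=5\). With \(12=\lfloor 25/2\rfloor\) arcs and no directed triangle, the extremal digraphs should be the ``blown-up'' ones: a bipartition \([5]=X\cup Y\) with all arcs inside \(X\) and inside \(Y\) in both directions and all arcs from \(X\) to \(Y\) (sizes \(2,3\)). We verify this directly, or via the standard proof: double arcs must form a triangle-free graph. Translated to lines, this yields a dichotomy on each extremal \(C\). Lines inside a part carry \(s_L=2\), cross lines carry \(s_L=1\), and with \(|X|=2,|Y|=3\) there are exactly \(4\) doubly occupied lines and \(6\) singly occupied lines, none empty.

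\textbf{Step 2: global contradiction via \(A_2\)-subsystems.} Next we exploit Lemma~\ref{lem:H4-A2-completion}. Consider any \(A_2\)-subsystem \(\{L,M,N\}\). If two of its lines have \(s=2\) and the third has \(s\ge1\), then \(S\) contains a zero-sum triple, since choosing signs appropriately on the doubled lines always completes the relation. In the extremal structure of Step 1 we have \(s_L\ge1\) for all \(L\) (every line lies in some \(C\)), and the doubled lines inside \(X\cup Y\) form a \(K_2\cup K_3\) pattern inside \(T(5)\). The aim is to show that the set \(D=\{L:s_L=2\}\) must induce in \(\Gamma\) a graph with no edge whose \(A_2\)-completion is present. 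Yet the \(K_3\) on \(Y\) consists of three lines forming an \(A_2\)-subsystem, all doubled, which is impossible. If that direct contradiction fails because of sign subtleties (within \(Y\) the doubled lines genuinely can coexist, as in the blow-up), we use counting instead. We set \(\mathbf s\) as the vector of \(s_L\) and compute \(\|B^\top \mathbf s\|^2=\mathbf s^\top BB^\top\mathbf s\) using \eqref{eq:H4-BBT}. Then we decompose \(\mathbf s\) into eigenspaces of \(A_\Gamma\) using \eqref{eq:H4-minpoly}, with eigenvalues \(20,5,0,-4\). On these the polynomial \(p(\lambda)=(-\lambda^3+25\lambda^2+80\lambda)/36\) takes the values \(600/36\cdot 10\)-type constants (that is, \(p(20)=100\)), \(p(5)=25\), \(p(0)=0\), and \(p(-4)=-4\cdot(\ldots)\). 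Requiring \(B^\top\mathbf s\equiv 12\) forces \(\mathbf s\) to equal the constant \(6/5\cdot\mathbf 1\) plus a vector in the kernel of \(B^\top\), namely the eigenspaces with \(p(\lambda)=0\). Combined with integrality and the local \(4/6\) pattern, this should contradict the \(A_2\)-completion pairing around a fixed line \(T\). Around \(T\), the \(20\) neighbours pair into \(10\) \(A_2\)-triples, and the local counts of doubled neighbours forced by Step 1 are incompatible with the no-zero-sum rule on those pairs.

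\textbf{Main obstacle.} The hard step is Step 2: converting ``every \(A_4\)-subsystem is tight'' into a contradiction. Concretely, this means pinning down exactly which triples of lines with given multiplicities create a zero-sum triple in \(S\) (sign bookkeeping), then double counting over pairs \((T,\{M,N\})\) with \(\{T,M,N\}\) an \(A_2\)-subsystem. I would try first the double count: compute the expected number of doubled lines among neighbours of a line, using regularity (part (ii)) and tightness, and show that it exceeds what the pairing of Lemma~\ref{lem:H4-A2-completion} permits.
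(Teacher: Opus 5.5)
Your reduction to excluding $|S|=72$ matches the paper, and so does your observation that $B^\top z=0$ forces $z=\mathbf s-\tfrac65\boldsymbol{1}$ into the $0$-eigenspace of $A_\Gamma$. (The polynomial $p$ is nonzero at $20,5,-4$; note $p(-4)=4$.) The proposal nevertheless has two genuine gaps.

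\textbf{Step 1 is false.} The digraph you describe has arcs in both directions inside $X$ and inside $Y$, plus all arcs $X\to Y$, with $|Y|=3$. It therefore has $2+6+6=14$ arcs and contains the directed triangle on $Y$. Correspondingly, four lines of weight $2$ and six of weight $1$ give total weight $14$, not $12$.

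The genuine equality cases for $n=5$ come in several types. One type puts all arcs in both directions between the parts of a $2+3$ split and nothing inside, giving six lines of weight $2$ and four of weight $0$. Another is a transitive blow-up with parts of sizes $2,2,1$, giving two lines of weight $2$ and eight of weight $1$. Writing $D,O,Z$ for the lines of weight $2,1,0$, tightness of $C$ yields only the count $|D\cap C|-|Z\cap C|=2$ plus some structure. It does not exclude weight-$0$ lines, so your conclusion $s_L\ge 1$ for all $L$ is unfounded.

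This is exactly the crux. If $Z$ were empty the lemma would be easy: two adjacent lines of $D$ would have nowhere to put their $A_2$-completion, yet the neighbour identity below would give every line of $D$ four neighbours in $D$. There is also no sign subtlety that rescues the $K_3$ on $Y$. Three doubled lines $y_1y_2,y_2y_3,y_3y_1$ always contain a zero-sum triple.

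\textbf{The final contradiction is never carried out, and it must account for $Z$.} By $20$-regularity, $A_\Gamma z=0$ says $\sum_{M\sim L}s_M=24$, that is, $d_D(L)-d_Z(L)=4$ for every line $L$. Let $e_D,e_Z,e_{DZ}$ count the edges of $\Gamma$ inside $D$, inside $Z$, and between $D$ and $Z$. Note $|D|=|Z|+12$.
\begin{itemize}
\item Summing the identity over $L\in Z$ gives $e_{DZ}=2e_Z+4|Z|$.
\item Summing it over $L\in D$ then gives $e_D=e_Z+4|Z|+24$.
\item Your correct remark shows that two adjacent lines of $D$ force their $A_2$-completion into $Z$.
\item The completion pairs the $20$ neighbours of each fixed $T\in Z$ into disjoint pairs, so $T$ completes at most $d_D(T)/2$ edges inside $D$.
\item Hence $e_D\le\frac12\sum_{T\in Z}d_D(T)=\frac12 e_{DZ}=e_Z+2|Z|$, contradicting $e_D=e_Z+4|Z|+24$.
\end{itemize}
No local equality-case analysis of the directed Mantel bound is needed. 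The global neighbour-sum identity together with this pairing bound is what closes the proof, and your proposal stops at ``should contradict the pairing'' without supplying it.
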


\begin{proof}[Proof of Lemma~\ref{lem:H4-independent}]
For each line \(L\in\mathcal L\), set \(w(L)=|S\cap L|\). Then we have
\(w(L)\in\{0,1,2\}\), and \(\sum_{L\in\mathcal L}w(L)=|S|\).

First we recover the weaker bound \(|S|\le72\).  Fix \(C\in\mathcal C\).
The roots on the lines of \(C\) form a copy of \(\mathcal V_{5}\).  In the
proof of Theorem~\ref{thm:main}, the points of \(\mathcal V_{5}\) were
identified with the arcs of the complete directed graph on \(5\) vertices,
and its zero-sum triples were exactly the directed triangles.  Therefore Lemma~\ref{lem:directed-mantel} gives
\begin{equation}\label{eq:H4-C-bound}
    \sum_{L\in C}w(L)\le \left\lfloor\frac{5^{2}}{2}\right\rfloor=12.
\end{equation}
Double-counting the incidences between selected roots and
\(A_{4}\) systems gives
\[
    10|S|=\sum_{C\in\mathcal C}\sum_{L\in C}w(L)\le 60\cdot12=720.
\]
Hence \(|S|\le72\).

It remains to rule out equality.  Suppose, for contradiction, that
\(|S|=72\).  Then equality must hold in \eqref{eq:H4-C-bound} for every
\(C\in\mathcal{C}\), so for each \(C\in\mathcal{C}\)
\begin{equation}\label{eq:H4-equality-all-C}
    \sum_{L\in C}w(L)=12.
\end{equation}
Let \(\boldsymbol{1}\in\mathbb R^{\mathcal L}\) be the all-one vector and
put \(z=w-\frac{6}{5}\boldsymbol{1}\).  The number \(\frac{6}{5}\) is the
average value of \(w\) under the assumption \(|S|=72\), since
\(|\mathcal L|=60\).  Let \(B\) be the \(\mathcal L\times\mathcal C\)
incidence matrix from Lemma~\ref{lem:H4-line-incidence}(iii).  Since every
\(C\in\mathcal C\) has \(10\) lines, \eqref{eq:H4-equality-all-C} is
exactly \(B^\top z=0\).
Multiplying by \(B\), and using \eqref{eq:H4-BBT} we have
\begin{equation}\label{eq:H4-A-polynomial-z}
    A_{\Gamma}(-A_{\Gamma}^{2}+25A_{\Gamma}+80I)z=0.
\end{equation}
By \eqref{eq:H4-minpoly}, the only possible eigenvalues of \(A_{\Gamma}\)
are \(20,5,0,-4\).  The polynomial \(-t^{2}+25t+80\) takes the nonzero
values \(180,180,80,-36\) at these four numbers, respectively.  Since
\(A_{\Gamma}\) is a real symmetric matrix, it is diagonalizable over an
orthonormal eigenbasis.  Therefore
\(-A_{\Gamma}^{2}+25A_{\Gamma}+80I\) is invertible on each eigenspace, and
\eqref{eq:H4-A-polynomial-z} implies \(A_{\Gamma} z=0\). Since
\(w=z+\frac{6}{5}\boldsymbol{1}\) and \(\Gamma\) is \(20\)-regular, for
every \(L\in\mathcal L\),
\begin{equation}\label{eq:H4-neighbor-weight}
    \sum_{M\sim L}w(M)=\frac{6}{5}\deg_{\Gamma}(L)=\frac{6}{5}\cdot20=24.   
\end{equation}

Now split the lines according to their weights:
\(D=\{L:w(L)=2\}\), \(O=\{L:w(L)=1\}\), and \(Z=\{L:w(L)=0\}\).
For a line \(L\), let \(d_{D}(L)\) and \(d_{Z}(L)\) be the numbers of
\(\Gamma\)-neighbors of \(L\) in \(D\) and \(Z\).  Since \(\Gamma\) is
\(20\)-regular, and since the weighted neighbor sum is \(24\), we have
\(d_{D}(L)-d_{Z}(L)=4\) for every \(L\in\mathcal L\).

Let \(b=|Z|\).  From \(2|D|+|O|=72\) and
\(|D|+|O|+|Z|=60\), we get \(|D|-|Z|=12\), hence \(|D|=b+12\).
Let \(e_{D}\) be the number of edges of \(\Gamma\) inside \(D\), let \(e_{Z}\)
be the number of edges inside \(Z\), and let \(e_{DZ}\) be the number of
edges between \(D\) and \(Z\).  Summing \(d_{D}(L)-d_{Z}(L)=4\) over
\(L\in Z\) and \(L\in D\) gives \(e_{DZ}-2e_{Z}=4b\) and
\(2e_{D}-e_{DZ}=4|D|\), respectively.  Combining these two identities with
\(|D|=b+12\), we obtain \(e_{D}=e_{Z}+4b+24\).

We now use the zero-sum-free condition.  By
Lemma~\ref{lem:H4-A2-completion}, every edge of \(\Gamma\) has a unique
third line completing it to an \(A_{2}\)-subsystem.  If \(L,M\in D\) are
adjacent, this completion line must lie in \(Z\).
Indeed, choose representatives
\(\boldsymbol{x}\in L,\boldsymbol{y}\in M,\boldsymbol{z}\in T\) for the
three completion lines with
\(\boldsymbol{x}+\boldsymbol{y}+\boldsymbol{z}=\boldsymbol{0}\).  Since
\(L,M\in D\), both signs of \(\boldsymbol{x}\) and both signs of
\(\boldsymbol{y}\) lie in \(S\).  If the completion line \(T\) had
positive weight, then \(S\) would contain either \(\boldsymbol{z}\) or
\(-\boldsymbol{z}\), in the first case
\(\boldsymbol{x},\boldsymbol{y},\boldsymbol{z}\) form a forbidden triple,
and in the second case
\(-\boldsymbol{x},-\boldsymbol{y},-\boldsymbol{z}\) form one.  Hence
\(T\in Z\).

Fix \(T\in Z\).  Again by Lemma~\ref{lem:H4-A2-completion}, the \(20\)
neighbors of \(T\) in \(\Gamma\) are paired into \(10\) unordered pairs,
each pair together with \(T\) forming an \(A_{2}\)-subsystem.  Therefore the
number of \(D\)-edges completed by this particular \(T\) is at most
\(d_{D}(T)/2\), because the relevant neighbor pairs are disjoint.  Summing
over \(T\in Z\), we get
\[
    e_{D}\le\frac{1}{2}\sum_{T\in Z}d_{D}(T)=\frac{e_{DZ}}{2}.
\]
Using \(e_{DZ}-2e_{Z}=4b\), this becomes \(e_{D}\le e_{Z}+2b\), contradicting
\(e_{D}=e_{Z}+4b+24\).  Hence \(|S|=72\) is impossible, and therefore
\(|S|\le 71\).
\end{proof}

\begin{proof}[Proof of Theorem~\ref{thm:S3-H4-bound}]
Lemma~\ref{lem:H4-independent} says that every zero-sum-free subset of the
finite configuration \(\Phi\subseteq\mathbb S^{3}\) has size at most \(71\).
Applying Lemma~\ref{lem:finite-averaging} with \(F=\Phi\) gives
\(m_{4}\le71/120\).  The case \(d=5\) follows from
Lemma~\ref{lem:monotonicity}.  This proves the theorem.
\end{proof}

\section*{Acknowledgements}
Zixiang Xu would like to thank the organizes who invited him to attend the conference on directed graph theory at Nankai University on April 2026 and presented him with the book
\emph{Introduction to Digraphs} (in Chinese). He also thanks Jiangdong Ai for introducing more related results on the directed version of Tur\'{a}n type results.

\bibliographystyle{abbrv}
\bibliography{B11}

\appendix

\section{Proofs of Lemmas~\ref{lem:H4-line-incidence} and~\ref{lem:H4-A2-completion} }
\label{app:H4-counts}

This appendix gives the finite coordinate verification used in
Lemma~\ref{lem:H4-line-incidence} and
Lemma~\ref{lem:H4-A2-completion}.

Recall that \(\varphi=\frac{1+\sqrt{5}}{2}\), then for \(
    a=\frac{1}{2},\ b=\frac{\varphi}{2},\ c=\frac{\varphi^{-1}}{2},
\) we have
\[
    \varphi^{2}=\varphi+1,\ \varphi^{-1}=\varphi-1,\ a^{2}+b^{2}+c^{2}=1,\ bc=\frac{1}{4}.
\]
For a root \(\boldsymbol{x}\in\Phi\), write
\(\langle\boldsymbol{x}\rangle=\{\boldsymbol{x},-\boldsymbol{x}\}\).  
All finite assertions in this appendix are exact finite coordinate checks in
the field \(\mathbb Q(\sqrt{5})\).  
The set \(\Phi\) is enumerated from
the displayed formulae, and the \(60\) antipodal lines are represented by
choosing one vector from each pair \(\{\boldsymbol{x},-\boldsymbol{x}\}\).
Whenever a count below is stated to be independent of the chosen line or
ordered pair, the assertion means that the same exact computation has been
made over the corresponding full finite set of lines or ordered pairs; the
displayed representatives are included only to make the arithmetic
readable.

\subsection{Line valencies}
\label{app:H4-line-valencies}

\begin{lemma}\label{lem:app-H4-line-valencies}
For every line \(L\in\mathcal L\), the other \(59\) lines split as follows:
\begin{equation}\label{eq:app-H4-line-valencies}
\begin{array}{c|cccc}
|\langle L,M\rangle|
& \frac{1}{2} & 0 & \frac{\varphi^{-1}}{2} & \frac{\varphi}{2} \\
\hline
\#\{M:|\langle L,M\rangle|\text{ has this value}\}
& 20 & 15 & 12 & 12 .
\end{array}
\end{equation}
In particular, \(\Gamma\) is \(20\)-regular.
\end{lemma}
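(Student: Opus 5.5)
The plan is to reduce the computation to a single line by using the symmetry of \(\Phi\), and then check the inner products for that line directly from the coordinates. A linear map that permutes \(\Phi\) preserves inner products, so it preserves the multiset \(\{|\langle L,M\rangle|:M\ne L\}\) for each line \(L\). It therefore suffices to show that the isometries preserving \(\Phi\) act transitively on \(\mathcal L\), and then to compute the distribution for one representative line, say \(L_0=\langle(1,0,0,0)\rangle\). For transitivity I would use the following explicit isometries:

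\begin{enumerate}
\item[(a)] sign changes of coordinates;
\item[(b)] even permutations of coordinates (both families visibly preserve \(\Phi_1\), \(\Phi_2\), \(\Phi_3\));
\item[(c)] left multiplication by a unit quaternion in \(\Phi\), for instance \(\frac12(1,1,1,1)\).
\end{enumerate}

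The family (c) uses the standard fact that \(\Phi\), read as quaternions, is the binary icosahedral group and hence closed under multiplication. If I want to avoid that fact, the fallback is to state that the computation has been carried out for all \(60\) lines, as the appendix preamble allows. Since left multiplication acts transitively on the group, every root is the image of \((1,0,0,0)\), so every line is the image of \(L_0\).

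For \(L_0\), the quantity \(|\langle L_0,M\rangle|\) is simply \(|x_1|\) for a representative \(x\in M\). I will count over the \(120\) roots and then divide by two.

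\begin{enumerate}
\item[\(\Phi_1\):] the roots \(\pm\boldsymbol e_1\) give \(L_0\) itself, and the other \(6\) roots have \(x_1=0\).
\item[\(\Phi_2\):] all \(16\) roots have \(|x_1|=\frac12\).
\item[\(\Phi_3\):] the \(96\) roots come from \(\operatorname{Alt}_4\), which has \(12\) elements, combined with \(8\) sign patterns. The even permutations place each of the four absolute values \(0,\frac12,\frac\varphi2,\frac{\varphi^{-1}}2\) in the first coordinate exactly \(3\) times. So each value occurs for \(3\cdot 8=24\) roots.
\end{enumerate}

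Adding up, the value \(\frac12\) occurs for \(16+24=40\) roots, the value \(0\) for \(6+24=30\) roots, and each of \(\frac{\varphi}{2}\) and \(\frac{\varphi^{-1}}2\) for \(24\) roots. Halving to pass to lines gives \(20,15,12,12\), which totals \(59\), as required. Since \(\Gamma\) joins lines at \(|\langle L,M\rangle|=\frac12\), it is \(20\)-regular.

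The main obstacle is the transitivity step, namely verifying that the chosen maps preserve \(\Phi\). The cleanest route is the quaternion closure of \(\Phi\), which I would cite or else verify on generators. Alternatively, the same counting can be repeated directly for a representative line of each type \(\Phi_1\), \(\Phi_2\), \(\Phi_3\); within each type, the sign changes and \(\operatorname{Alt}_4\) are transitive up to checking a few cases. For \(\Phi_3\) this still requires a full \(\mathbb Q(\sqrt5)\) inner-product count for one line, which is where I expect most of the effort to go.
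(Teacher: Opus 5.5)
Your proof is correct, but it takes a different route from the paper's. The paper uses only the visible coordinate symmetries: sign changes and $\operatorname{Alt}_4$. These preserve each of $\Phi_1,\Phi_2,\Phi_3$ and are transitive within each part. The paper therefore has to tabulate three representatives, $\langle(1,0,0,0)\rangle$, $\langle\frac12(1,1,1,1)\rangle$ and $\langle(0,a,b,c)\rangle$. The last of these requires a genuine inner-product count in $\mathbb Q(\sqrt5)$, giving the row $17,12,9,9$ from $\Phi_3$. The paper then observes that the three blocks have the same totals.

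You instead use the hidden symmetry coming from quaternion multiplication. It mixes the three parts and makes the isometry group of $\Phi$ transitive on $\mathcal L$, so only the trivial representative $\langle\boldsymbol e_1\rangle$ is needed. Your count there is exactly the first block of the paper's table: $3$ orthogonal lines from $\Phi_1$, $8$ lines at $\frac12$ from $\Phi_2$, and $12$ lines of each value from $\Phi_3$, since exactly $3$ elements of $\operatorname{Alt}_4$ bring a given position to the first.

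Your route trades the $\Phi_3$ computation for the classical but nontrivial fact that $\Phi$ is closed under quaternion multiplication. As a side benefit it establishes line-transitivity, which is the kind of reduction later appendix lemmas make when they compute only at $\langle(1,0,0,0)\rangle$. The paper's route avoids needing that fact at all.

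Two details to tighten.

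\textbf{The example in (c) is not sufficient by itself.} Sign changes, even permutations and left multiplication by $\frac12(1,1,1,1)$ all preserve the $24$ Hurwitz units $\Phi_1\cup\Phi_2$. So the group they generate cannot move $\langle\boldsymbol e_1\rangle$ into $\Phi_3$. What your transitivity sentence actually uses is left multiplication by every element of $\Phi$, in particular by elements of $\Phi_3$; that is, full closure. With full closure, (a) and (b) are unnecessary. If you instead verify closure on generators, the generating set must include an element of $\Phi_3$.

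\textbf{Mind the chirality convention when citing closure.} The paper's $\Phi_3$ consists of even permutations of $\frac12(0,\pm1,\pm\varphi,\pm\varphi^{-1})$. This is the mirror image of a form often quoted for the binary icosahedral group, namely even permutations of $\frac12(0,\pm1,\pm\varphi^{-1},\pm\varphi)$. The difference is harmless: swapping the $j$- and $k$-coordinates is an anti-automorphism of the quaternions that fixes the Hurwitz units and carries one set onto the other, so multiplicative closure transfers.
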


\begin{proof}[Proof of Lemma~\ref{lem:app-H4-line-valencies}]
By the coordinate symmetries, it is enough to consider one representative
from each of the three parts \(\Phi_{1},\Phi_{2},\Phi_{3}\).  For such a
representative \(L\), the following table records the number of lines
from each part having the given absolute inner product with \(L\):
\[
\begin{array}{c|c|cccc}
L & \text{part of }M & \frac{1}{2} & 0 & \frac{\varphi^{-1}}{2} & \frac{\varphi}{2} \\
\hline
\langle(1,0,0,0)\rangle & \Phi_{1} & 0 & 3 & 0 & 0\\
& \Phi_{2} & 8 & 0 & 0 & 0\\
& \Phi_{3} & 12 & 12 & 12 & 12\\
\hline
\left\langle\frac{1}{2}(1,1,1,1)\right\rangle & \Phi_{1} & 4 & 0 & 0 & 0\\
& \Phi_{2} & 4 & 3 & 0 & 0\\
& \Phi_{3} & 12 & 12 & 12 & 12\\
\hline
\langle(0,a,b,c)\rangle & \Phi_{1} & 1 & 1 & 1 & 1\\
& \Phi_{2} & 2 & 2 & 2 & 2\\
& \Phi_{3} & 17 & 12 & 9 & 9
\end{array}
\]
For instance, in the first row the value of
\(|\langle(1,0,0,0),\boldsymbol{x}\rangle|\) is just the absolute value of
the first coordinate of \(\boldsymbol{x}\).  In the last row one uses
\(a^{2}+b^{2}+c^{2}=1\), \(bc=1/4\), and \(b-c=1/2\).  Summing the three part
rows in each block gives \(20,15,12,12\), as required.
\end{proof}

\subsection{\texorpdfstring{\(A_{2}\)}{A2}-completion}
\label{app:H4-A2-completion}

\begin{lemma}\label{lem:app-H4-A2-completion}
Every edge of \(\Gamma\) has a unique completion to an \(A_{2}\)-subsystem.
For each fixed line \(T\), this completion pairs the \(20\) neighbors of
\(T\) into \(10\) unordered pairs.
\end{lemma}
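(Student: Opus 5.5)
The approach is a direct coordinate computation, organized so that the uniqueness statement reduces to a norm computation and the pairing statement is a formal consequence of uniqueness.

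\textbf{Step 1 (candidates for the third line).} Let \(L,M\) be adjacent in \(\Gamma\), with representatives \(\boldsymbol{x}\in L\), \(\boldsymbol{y}\in M\). Replacing \(\boldsymbol{y}\) by \(-\boldsymbol{y}\) if necessary, we may assume \(\boldsymbol{x}\cdot\boldsymbol{y}=-\frac12\). Then
\[
\|\boldsymbol{x}+\boldsymbol{y}\|^{2}=1+1-1=1,
\]
so \(\boldsymbol{z}:=-(\boldsymbol{x}+\boldsymbol{y})\) is a unit vector with \(\boldsymbol{x}+\boldsymbol{y}+\boldsymbol{z}=\boldsymbol{0}\).

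Conversely, suppose some representatives \(\pm\boldsymbol{x},\pm\boldsymbol{y},\boldsymbol{z}'\) of \(L,M,T\) sum to zero. All three are unit vectors, so the two chosen vectors from \(L\) and \(M\) have inner product \(-\frac12\). This forces their signs to be \((\boldsymbol{x},\boldsymbol{y})\) or \((-\boldsymbol{x},-\boldsymbol{y})\). Hence \(\boldsymbol{z}'=\pm\boldsymbol{z}\), and the third line \(T\) is uniquely determined as the line \(\langle\boldsymbol{x}+\boldsymbol{y}\rangle\).

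\textbf{Step 2 (closure: the main obstacle).} It remains to show that \(\boldsymbol{x}+\boldsymbol{y}\in\Phi\) whenever \(\boldsymbol{x},\boldsymbol{y}\in\Phi\) and \(\boldsymbol{x}\cdot\boldsymbol{y}=-\frac12\). This is the only genuinely computational step, and the plan is to reduce it by symmetry.

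1. The coordinate sign changes together with \(\operatorname{Alt}_4\) act on \(\Phi\), as is visible from the defining formulas for \(\Phi_1,\Phi_2,\Phi_3\). This action preserves inner products.
2. Using it, take \(\boldsymbol{x}\) to be one of the three representatives \((1,0,0,0)\), \(\frac12(1,1,1,1)\), \((0,a,b,c)\) used in Lemma~\ref{lem:app-H4-line-valencies}. That the action is transitive within each part \(\Phi_i\) is the same symmetry reduction already invoked in the proof of Lemma~\ref{lem:app-H4-line-valencies}.
3. For each representative, list the \(20\) roots \(\boldsymbol{y}\) with \(\boldsymbol{x}\cdot\boldsymbol{y}=-\frac12\). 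For instance, for \(\boldsymbol{x}=(1,0,0,0)\) these are the \(8\) vectors of \(\Phi_2\) with first coordinate \(-\frac12\), and the \(12\) vectors of \(\Phi_3\) with first coordinate \(-\frac12\).
4. Check that each sum \(\boldsymbol{x}+\boldsymbol{y}\) lies in \(\Phi\), using the identities \(\varphi^2=\varphi+1\), \(\varphi^{-1}=\varphi-1\), \(bc=\frac14\), and \(b-c=\frac12\). For example, \((1,0,0,0)+\frac12(-1,\varepsilon_2,\varepsilon_3,\varepsilon_4)=\frac12(1,\varepsilon_2,\varepsilon_3,\varepsilon_4)\in\Phi_2\).

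The case \(\boldsymbol{x}=(0,a,b,c)\) is the messiest, since it involves many \(\Phi_3\) partners and reductions in \(\mathbb Q(\sqrt5)\). I would present these checks in a short table, in the same style as Lemma~\ref{lem:app-H4-line-valencies}, and rely on the exact finite enumeration as stated in the appendix preamble.

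\textbf{Step 3 (pairing).} Fix a line \(T\) with representative \(\boldsymbol{t}\), and let \(M\sim T\). By Step 1, the edge \(TM\) has a unique completion line \(M'\). In the resulting \(A_2\)-subsystem all three pairwise absolute inner products equal \(\frac12\), so \(M'\) is also a neighbor of \(T\). Moreover \(M'\neq M\), since \(M'\neq M\) is forced by \(\boldsymbol{x}+\boldsymbol{y}+\boldsymbol{z}=\boldsymbol{0}\) with unit vectors (a line cannot contain two of \(\boldsymbol{x},\boldsymbol{y},\boldsymbol{z}\) up to sign when their pairwise inner products are \(-\frac12\)).

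The \(A_2\)-subsystem \(\{T,M,M'\}\) is symmetric in its three lines. Applying uniqueness to the edge \(TM'\) therefore returns \(M\). So \(M\mapsto M'\) is a fixed-point-free involution on the \(20\) neighbors of \(T\), and it partitions them into \(10\) unordered pairs.
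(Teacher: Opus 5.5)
Your proposal is correct and follows essentially the same route as the paper. Both arguments choose signs so that \(\boldsymbol{x}\cdot\boldsymbol{y}=-\tfrac12\), note that \(-\boldsymbol{x}-\boldsymbol{y}\) is the only candidate and is a unit vector, check membership in \(\Phi\) by a coordinate computation reduced via the sign-change and \(\operatorname{Alt}_4\) symmetries, and obtain the pairing from the fixed-point-free involution \(N\mapsto N'\). The only differences are minor: you make explicit the sign argument for uniqueness of the third line and the fact that \(M'\) is again a neighbour of \(T\), whereas the paper's table also reduces the partners \(\boldsymbol{y}\) by the stabilizer of \(\boldsymbol{x}\) instead of listing all \(20\).
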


\begin{proof}[Proof of Lemma~\ref{lem:app-H4-A2-completion}]
Let \(L,M\in\mathcal L\) be adjacent.  Choose signs
\(\boldsymbol{x}\in L\) and \(\boldsymbol{y}\in M\) so that
\(\boldsymbol{x}\cdot\boldsymbol{y}=-1/2\).  Then
\(\boldsymbol{z}:=-\boldsymbol{x}-\boldsymbol{y}\) is the only possible
third vector in a zero-sum triple with \(\boldsymbol{x}\) and
\(\boldsymbol{y}\), and it has norm \(1\).  It remains only to check that
\(\boldsymbol{z}\in\Phi\).  Up to the coordinate symmetries described
above, all adjacent pairs are represented in the following table, and in
each row the displayed \(\boldsymbol{z}\) belongs to one of
\(\Phi_{1},\Phi_{2},\Phi_{3}\):
\[
\begin{array}{c|c|c}
\boldsymbol{x} & \boldsymbol{y} & -\boldsymbol{x}-\boldsymbol{y}\\
\hline
(-1,0,0,0) & (a,a,a,a) & (a,-a,-a,-a)\\
(-1,0,0,0) & (a,0,c,b) & (a,0,-c,-b)\\
(-a,-a,-a,-a) & (a,a,a,-a) & (0,0,0,1)\\
(-a,-a,-a,-a) & (0,a,b,-c) & (a,0,-c,b)\\
(0,-a,-b,-c) & (0,-a,b,c) & (0,1,0,0)\\
(0,-a,-b,-c) & (0,b,c,-a) & (0,-c,a,b)\\
(0,-a,-b,-c) & (a,0,c,b) & (-a,a,a,-a)\\
(0,-a,-b,-c) & (a,b,0,c) & (-a,-c,b,0)\\
(0,-a,-b,-c) & (a,-c,b,0) & (-a,b,0,c)\\
(0,-a,-b,-c) & (b,a,c,0) & (-b,0,a,c)
\end{array}
\]
Thus the completion exists and is unique.  Now fix \(T\).  If \(N\) is a
neighbor of \(T\), let \(N'\) be the unique third line completing \(T\) and \(N\).  
Then \(N'\ne N\), since otherwise a zero-sum relation would have
the form \(\boldsymbol{t}+2\boldsymbol{n}=\boldsymbol{0}\) with
\(\|\boldsymbol{t}\|=\|\boldsymbol{n}\|=1\).  
Uniqueness also implies that
the map \(N\mapsto N'\) is an involution.  
Hence the \(20\) neighbors of
\(T\) are split into \(10\) unordered pairs.
\end{proof}

\subsection{\texorpdfstring{\(A_{4}\)}{A4}-subsystems}
\label{app:H4-A4-incidences}

Recall that an \(A_{4}\)-subsystem $C\subseteq \mathcal{L}$ is a set of \(10\) antipodal lines whose
\(20\) roots are isometric to
\[
    \mathcal V_{5}=
    \left\{\frac{\boldsymbol{e}_{i}-\boldsymbol{e}_{j}}{\sqrt{2}}:
    1\le i,j\le5,\ i\ne j\right\}.
\]
In particular, the induced subgraph of $\Gamma$  on \(C\) is isomorphic to triangular graph \(T(5)\).

\begin{lemma}\label{lem:app-H4-A4-incidences}
The family \(\mathcal C\) of \(A_{4}\)-subsystems satisfies the following.
\begin{enumerate}
    \item[\textup{(i)}] \(|\mathcal C|=60\).
    \item[\textup{(ii)}] Every line \(L\in\mathcal L\) belongs to exactly
    \(10\) members of \(\mathcal C\).
    \item[\textup{(iii)}] For two lines \(L,M\in\mathcal L\), the number
    of members of \(\mathcal C\) containing both \(L\) and \(M\) depends
    only on \(|\langle L,M\rangle|\), and is given by
    \begin{equation}\label{eq:app-H4-A4-pair-table}
    \begin{array}{c|ccccc}
    |\langle L,M\rangle|
    & 1 & \frac{1}{2} & 0 & \frac{\varphi^{-1}}{2} & \frac{\varphi}{2} \\
    \hline
    \#\{C\in\mathcal C:L,M\in C\}
    & 10 & 3 & 2 & 0 & 0 .
    \end{array}
    \end{equation}
\end{enumerate}
\end{lemma}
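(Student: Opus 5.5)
The plan is to turn the enumeration of \(A_{4}\)-subsystems into a clique count in \(\Gamma\) together with a local structural check. An \(A_{4}\)-subsystem \(C\) determines its \(T(5)\) structure, so it is determined by the \(A_{2}\)-subsystems it contains. These are the \(10\) triangles \(\{ij,jk,ik\}\) of \(T(5)\), and by Lemma~\ref{lem:app-H4-A2-completion} each such triangle is determined by any one of its edges. Conversely, \(C\) is generated by a small spanning configuration. I would use a pair of orthogonal lines together with one further line adjacent to both. In \(T(5)\), two disjoint pairs \(\{1,2\},\{3,4\}\) are non-adjacent, and the root \(\boldsymbol v_{15}\) is adjacent to \(\boldsymbol v_{12}\) but not to \(\boldsymbol v_{34}\). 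So I will take instead a simple basis of \(A_{4}\): four lines \(L_{1},L_{2},L_{3},L_{4}\) forming a path in \(\Gamma\) whose nonconsecutive members are orthogonal. Iterating the \(A_{2}\)-completion then recovers all \(10\) lines. Hence \(A_{4}\)-subsystems correspond to such induced \(P_{4}\)'s in the line system, modulo the automorphisms of the \(A_{4}\) Dynkin path.

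Steps (i) and (ii) follow from a single computation at a fixed line. By the transitivity of the coordinate symmetry group on the three parts, and since the table in Lemma~\ref{lem:app-H4-line-valencies} is uniform across lines, it suffices to fix representatives \(L=\langle(1,0,0,0)\rangle\), \(\langle\tfrac12(1,1,1,1)\rangle\) and \(\langle(0,a,b,c)\rangle\). For each, I enumerate the \(A_{4}\)-subsystems through \(L\) by listing simple systems starting at \(L\), and verify that the count is \(10\). This also deals with the subtlety that the symmetry group \(\pm\mathrm{Alt}_4\) with sign changes may not be transitive on lines. The full \(H_4\) Weyl group is transitive, but the paper only uses explicit coordinates, so I would rely on exact checks over all \(60\) lines, exactly as the appendix convention allows. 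Then (i) follows by double counting: \(|\mathcal C|\cdot10=60\cdot10\), so \(|\mathcal C|=60\).

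For (iii), I fix an ordered pair \((L,M)\) and count the members of \(\mathcal C\) containing both lines. The entries \(0\) for \(|\langle L,M\rangle|\in\{\varphi^{\pm1}/2\}\) are immediate, because the lines of \(\mathcal V_{5}\) only have absolute inner products \(1,\tfrac12,0\). The diagonal entry \(10\) is (ii). For the entry \(3\) when \(L\sim M\): \(L\), \(M\) and their completion \(N\) span an \(A_2\), and containing \(\{L,M\}\) is equivalent to containing this \(A_2\). In a \(T(5)\) this \(A_2\) is a triangle \(\{ij,jk,ik\}\), and I count the extensions to an \(A_{4}\) inside \(\Phi\). For the entry \(2\) with \(L\perp M\), which is \(A_1\times A_1\), I count the extensions likewise.

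Consistency checks then pin down the numbers. Summing over \(M\) gives \(\sum_M \#\{C\ni L,M\}=10\cdot 9=90\) over \(M\ne L\), and the table predicts \(20\cdot3+15\cdot2=90\). Likewise the \(T(5)\) counts work out: each \(C\ni L\) contains \(6\) neighbors of \(L\) and \(3\) orthogonal lines, so \(10\cdot6=60=20\cdot3\) and \(10\cdot3=30=15\cdot2\). Since these averages match, it is enough to show that the counts are constant on each class. This constancy is the real obstacle. I would attack it by showing that the stabilizer of \(L\) within the explicit symmetry group acts transitively on the neighbors of \(L\) and on the lines orthogonal to \(L\). Wherever that fails, I would fall back on the exact finite verification over all ordered pairs in \(\mathbb Q(\sqrt5)\).
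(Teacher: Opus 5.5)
Your plan has the same overall architecture as the paper's proof. You count the subsystems through one representative line from each of $\Phi_1,\Phi_2,\Phi_3$; this suffices because the coordinate symmetries are transitive on each part. You get $|\mathcal C|=60$ by double counting, read off the entries $10$ and $0$ in (iii) directly, and settle the rest by exact finite verification.

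Where you differ is the local enumeration device. The paper uses a bijection: a member of $\mathcal C$ through $L$ meets $\mathcal N_0(L)$ in a triangle of $T(5)$, there are exactly ten triangles in \eqref{eq:H4-N0-triangles}, and each extends uniquely via $R(Q)$. You instead generate subsystems from induced paths $L=L_1,L_2,L_3,L_4$ by iterated $A_2$-completion. The advantage is that membership in $\mathcal C$ is immediate. After choosing signs with consecutive inner products $-\frac{1}{2}$, the Gram matrix is that of $\boldsymbol v_{12},\boldsymbol v_{23},\boldsymbol v_{34},\boldsymbol v_{45}$, and Lemma~\ref{lem:app-H4-A2-completion} puts every $\boldsymbol x_i+\dots+\boldsymbol x_j$ in $\Phi$. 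The cost is that the map from paths to subsystems is far from injective. Your averaging identities $10\cdot 6=20\cdot 3$ and $10\cdot 3=15\cdot 2$ are a clean way to see that in (iii) only constancy on each class needs checking; the paper records this only as a consistency remark.

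Two points need fixing.

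\textbf{The correspondence is miscounted.} Subsystems do not correspond to induced paths ``modulo the automorphisms of the Dynkin path''. Inside a copy of $\mathcal V_5$ these paths are exactly the Hamiltonian paths of $K_5$, so each member of $\mathcal C$ contains $60$ of them, and $12$ of those have a given line $L$ as an end. Listing paths starting at $L$ therefore yields $12$ paths per subsystem, $120$ in all. You must deduplicate the generated $10$-line sets, or divide by $12$, to reach $10$. Taken literally, your stated correspondence gives the wrong count.

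\textbf{The stabilizer argument cannot work.} The attack on constancy via stabilizer transitivity fails for every line, not just some. The group of even coordinate permutations and sign changes preserves each $\Phi_i$. By the table in the proof of Lemma~\ref{lem:app-H4-line-valencies}, the $20$ neighbours and the $15$ orthogonal lines of every line meet at least two of the parts. Moreover, the stabilizer of a $\Phi_3$-line acts on lines as a group of order $2$. So constancy in (iii) rests entirely on your fallback, the exhaustive check over ordered pairs, which is exactly what the paper does. A conceptual argument would need the full reflection group of $\Phi$, after first verifying that $\Phi$ is closed under reflections in its own vectors.
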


\begin{proof}[Proof of Lemma~\ref{lem:app-H4-A4-incidences}]
We first display the construction for
\(L_{0}=\langle(1,0,0,0)\rangle\).  Let \(\mathcal N_{0}(L_{0})\) be the
set of lines orthogonal to \(L_{0}\).  By
Lemma~\ref{lem:app-H4-line-valencies},
\(|\mathcal N_{0}(L_{0})|=15\).  Write these lines as
\[
    E_{1}=\langle(0,1,0,0)\rangle,\ 
    E_{2}=\langle(0,0,1,0)\rangle,\ 
    E_{3}=\langle(0,0,0,1)\rangle,
\]
and, for \(\epsilon,\eta\in\{\pm1\}\),
\[
    X_{\epsilon,\eta}=\langle(0,a,\epsilon b,\eta c)\rangle,\ 
    Y_{\epsilon,\eta}=\langle(0,c,\epsilon a,\eta b)\rangle,\ 
    Z_{\epsilon,\eta}=\langle(0,b,\epsilon c,\eta a)\rangle.
\]
Substituting these coordinates shows that two such lines are adjacent
exactly in the ten triangles below.  The triangles are edge-disjoint and
there are \(30\) displayed edges.  Since each of the \(15\) lines appears
in four displayed edges, this also proves that
\(\Gamma[\mathcal N_{0}(L_{0})]\) is \(4\)-regular and that no edge is missing:
\begin{equation}\label{eq:H4-N0-triangles}
\begin{array}{lll}
Q_{1}=\{E_{1},X_{+,+},X_{-,-}\},&
Q_{2}=\{E_{1},X_{+,-},X_{-,+}\},\\
Q_{3}=\{E_{2},Y_{+,+},Y_{-,+}\},&
Q_{4}=\{E_{2},Y_{-,-},Y_{+,-}\},\\
Q_{5}=\{E_{3},Z_{+,+},Z_{+,-}\},&
Q_{6}=\{E_{3},Z_{-,+},Z_{-,-}\},\\
Q_{7}=\{X_{+,+},Y_{-,-},Z_{+,-}\},&
Q_{8}=\{Z_{+,+},X_{+,-},Y_{-,+}\},\\
Q_{9}=\{Y_{+,+},Z_{-,+},X_{-,-}\},&
Q_{10}=\{X_{-,+},Z_{-,-},Y_{+,-}\}.
\end{array}
\end{equation}

Each triangle \(Q\) in \eqref{eq:H4-N0-triangles} extends uniquely to an
\(A_{4}\)-subsystem containing \(L_{0}\).  Namely, define
\[
    R(Q)=\{U\in\mathcal L:U\sim L_{0}
    \text{ and }|\{W\in Q:U\sim W\}|=2\},
\]
and set \(C(Q)=\{L_{0}\}\cup Q\cup R(Q)\).  For each of the ten triangles,
the same substitution of coordinates gives
\begin{equation}\label{eq:H4-extension-count}
\begin{array}{c|cccc}
r&0&1&2&3\\
\hline
\#\{U\sim L_{0}:|\{W\in Q:U\sim W\}|=r\}
&2&12&6&0 .
\end{array}
\end{equation}
Thus \(|C(Q)|=10\).  The induced graph \(\Gamma[C(Q)]\) is \(6\)-regular,
two adjacent vertices have three common neighbors, and two non-adjacent
vertices have four common neighbors.  These three assertions are read
directly from the same table of adjacencies used above.  Hence
\(\Gamma[C(Q)]\) is strongly regular with parameters \((10,6,3,4)\), so its
complement is the Petersen graph and \(\Gamma[C(Q)]\cong T(5)\).  Thus
\(C(Q)\) is an \(A_{4}\)-subsystem.

Conversely, if \(C\in\mathcal C\) contains \(L_{0}\), then in the triangular
graph \(T(5)\) the three vertices non-adjacent to a fixed vertex form a
triangle.  Hence \(C\cap\mathcal N_{0}(L_{0})\) must be one of the ten
triangles in \eqref{eq:H4-N0-triangles}, and the remaining six lines are
forced by the definition of \(R(Q)\).  Therefore \(L_{0}\) lies in exactly
\(10\) members of \(\mathcal C\).

The preceding construction and converse are then repeated for the
representatives
\(\left\langle\frac{1}{2}(1,1,1,1)\right\rangle\in\Phi_{2}\) and
\(\langle(0,a,b,c)\rangle\in\Phi_{3}\).  The
orthogonal-neighborhood lists are different, but the same coordinate test
gives exactly ten triangles in \(\Gamma[\mathcal N_{0}(L)]\), and each
triangle extends by the same rule to a unique \(A_{4}\)-subsystem
containing \(L\).  Hence every line lies in exactly \(10\) members of
\(\mathcal C\).  Since every subsystem has \(10\) lines,
double-counting incidences gives \(10|\mathcal C|=10|\mathcal L|=600\),
so \(|\mathcal C|=60\).

It remains to count pair incidences.  We enumerate all pairs
\((L,M)\in\mathcal L^{2}\).  For each fixed line \(L\), list the ten
subsystems through \(L\) by the construction above, and count how often
each second line \(M\) occurs.  The exact count is independent of \(L\) and
depends on \(M\) only through \(|\langle L,M\rangle|\); the multiplicities
are as follows:
\[
\begin{array}{c|ccccc}
|\langle L,M\rangle|
& 1 & \frac{1}{2} & 0 & \frac{\varphi^{-1}}{2} & \frac{\varphi}{2}\\
\hline
\#\{C\in\mathcal C:L,M\in C\}
&10&3&2&0&0 .
\end{array}
\]
Here the entry \(10\) corresponds to \(M=L\).  The entries \(3\) and \(2\)
are also consistent with the internal structure of \(T(5)\): in each
subsystem, a fixed vertex has six adjacent vertices and three non-adjacent
vertices, and the coordinate count shows that these incidences are
distributed uniformly among the \(20\) adjacent and \(15\) orthogonal lines
to \(L\).  Lines with absolute inner product \(\frac{\varphi^{-1}}{2}\) or
\(\frac{\varphi}{2}\) cannot occur with \(L\) in an \(A_{4}\)-subsystem, since the
line system of \(\mathcal V_{5}\) has only the relations \(1\),
\(\frac{1}{2}\), and \(0\).
This proves \eqref{eq:app-H4-A4-pair-table}.
\end{proof}

\subsection{Walk counts}
\label{app:H4-walk-counts}

Let
\(
    R_{0}=1,\ R_{1}=\frac{1}{2},\ R_{2}=0,\ R_{3}=\frac{\varphi^{-1}}{2},\ R_{4}=\frac{\varphi}{2}.
\)
For fixed lines \(L,M\) with \(|\langle L,M\rangle|=R_{i}\), let \(p_{ij}\)
be the number of neighbors \(N\sim L\) such that
\(|\langle N,M\rangle|=R_{j}\).  The values below are obtained by exact
enumeration of all \(60^{2}\) ordered pairs \((L,M)\).  For each fixed
value of \(i\), the row is constant over all pairs with
\(|\langle L,M\rangle|=R_{i}\).

\begin{lemma}\label{lem:app-H4-transition-table}
With rows and columns ordered as
\(1,\frac{1}{2},0,\frac{\varphi^{-1}}{2},\frac{\varphi}{2}\), the transition
matrix \(P=(p_{ij})\) is
\begin{equation}\label{eq:H4-transition-table}
P=
\begin{pmatrix}
0&20&0&0&0\\
1&7&6&3&3\\
0&8&4&4&4\\
0&5&5&5&5\\
0&5&5&5&5
\end{pmatrix}.
\end{equation}
\end{lemma}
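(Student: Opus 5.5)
The plan is to prove the transition table \eqref{eq:H4-transition-table} row by row. For each row I would reduce to a few representative pairs and then count exactly, using the coordinate description of \(\Phi\) and the counts already established in Lemma~\ref{lem:app-H4-line-valencies}.

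\textbf{Reductions.} First I would fix the symmetry group used for reduction. This is the group generated by sign changes of coordinates, even permutations of coordinates, and the maps that send \(\Phi\) to itself. The \(H_4\) reflection group acts transitively on \(\Phi\), and it acts transitively on ordered pairs of lines with a given absolute inner product. I would only use this as a bookkeeping device. To stay consistent with the paper's "exact enumeration" standard, I would also state that the counts can be checked directly over all \(60^2\) ordered pairs.

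\textbf{Row \(i=0\).} Here \(M=L\). The neighbors of \(L\) all have \(|\langle N,L\rangle|=\frac12\), so the row is \((0,20,0,0,0)\) by Lemma~\ref{lem:app-H4-line-valencies}.

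\textbf{Easy identities.} Each row must sum to \(20\), since \(\Gamma\) is \(20\)-regular. Each column must also satisfy the double-counting identity \(k_i p_{ij}=k_j p_{ji}\), where \((k_0,\dots,k_4)=(1,20,15,12,12)\) are the valencies. Two checks illustrate this:
\[
15\cdot 8 = 20\cdot 6, \qquad 12\cdot 5 = 20\cdot 3.
\]
These identities mean that once the row for \(\frac12\) is known, the first entries of the other rows are forced. Also \(p_{i0}\) is \(1\) exactly when \(M\) is itself a neighbor of \(L\), which gives the first column.

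\textbf{Row \(i=1\).} Take \(L=\langle(1,0,0,0)\rangle\) and the neighbor \(M=\langle(a,a,a,a)\rangle\). I would list the \(20\) neighbors of \(L\): the \(8\) lines of \(\Phi_2\) and the \(12\) lines of \(\Phi_3\) whose first coordinate has absolute value \(a\). For each, I would compute the absolute inner product with \(M\); these are sums of signed coordinates divided by \(2\). The result should be the distribution \(1,7,6,3,3\), where the \(1\) is \(M\) itself.

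\textbf{Row \(i=2\).} Use \(L\) as above and the orthogonal line \(M=E_1=\langle(0,1,0,0)\rangle\). For a neighbor \(N\), \(|\langle N,M\rangle|\) is just the absolute second coordinate of \(N\), so the count is immediate. The \(8\) lines of \(\Phi_2\) give the value \(\frac12\). The \(12\) lines of \(\Phi_3\) with first coordinate \(\pm a\) have second coordinate ranging over \(0,b,c\), giving \(4\) each. This yields \((0,8,4,4,4)\).

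\textbf{Rows \(i=3,4\).} Use \(M=\langle(0,a,b,c)\rangle\), which has \(|\langle L,M\rangle|=0\) in coordinate \(1\); this is the wrong value, so instead I would take \(M\) with first coordinate \(c\) or \(b\). Then I would tally inner products against the \(20\) neighbors using \(bc=\frac14\), \(b-c=\frac12\), and \(a^2+b^2+c^2=1\).

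\textbf{Main obstacle.} I expect rows \(3\) and \(4\) to be the hardest. The inner products there mix \(\varphi\)-terms, and it is easy to misclassify a value. I would simplify every value into the form \(\frac{p+q\sqrt5}{4}\) before comparing it with \(\{0,\frac12,\frac{\varphi^{\pm1}}{2}\}\). I would then use the row-sum and \(k_ip_{ij}=k_jp_{ji}\) identities as consistency checks.

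Once the table is established for the representative pairs, constancy over all pairs with a given \(|\langle L,M\rangle|\) follows from transitivity, or alternatively from the exhaustive check.
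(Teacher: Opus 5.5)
Your proposal is correct and follows essentially the same route as the paper: fix $L=\langle(1,0,0,0)\rangle$, choose one representative $M$ for each value of $|\langle L,M\rangle|$ (the paper uses $\langle(c,0,b,a)\rangle$ and $\langle(b,0,a,c)\rangle$ for the last two rows), tally over the $20$ neighbours of $L$, and extend to all pairs either by symmetry or by exhaustive check. Your row-sum and $k_ip_{ij}=k_jp_{ji}$ consistency checks are a useful addition; note that these identities force the $\frac{1}{2}$-column entries of the last three rows, not the first column.
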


\begin{proof}[Proof of Lemma~\ref{lem:app-H4-transition-table}]
Take \(L_{0}=\langle(1,0,0,0)\rangle\).  Its \(20\) neighbors are precisely
the lines whose first coordinate has absolute value \(1/2\).  For the five
possible values of \(|\langle L_{0},M\rangle|\), use the representatives
\[
\begin{array}{c|c}
|\langle L_{0},M\rangle| & M\\
\hline
1 & \langle(1,0,0,0)\rangle\\
\frac{1}{2} & \left\langle\frac{1}{2}(1,1,1,1)\right\rangle\\
0 & \langle(0,1,0,0)\rangle\\
\frac{\varphi^{-1}}{2} &
\langle(c,0,b,a)\rangle\\
\frac{\varphi}{2} &
\langle(b,0,a,c)\rangle .
\end{array}
\]
For each representative \(M\), we compute
\(|\langle N,M\rangle|\) as \(N\) runs over the \(20\) neighbors of
\(L_{0}\).  The five resulting count rows are
\[
\begin{array}{c|ccccc}
|\langle L_{0},M\rangle|
& 1 & \frac{1}{2} & 0 & \frac{\varphi^{-1}}{2} & \frac{\varphi}{2}\\
\hline
1 & 0 & 20 & 0 & 0 & 0\\
\frac{1}{2} & 1 & 7 & 6 & 3 & 3\\
0 & 0 & 8 & 4 & 4 & 4\\
\frac{\varphi^{-1}}{2} & 0 & 5 & 5 & 5 & 5\\
\frac{\varphi}{2} & 0 & 5 & 5 & 5 & 5
\end{array}
\]
which is \eqref{eq:H4-transition-table}.  The same coordinate reduction
used in the preceding lemmas shows that these rows apply to every pair
\((L,M)\) with the corresponding value of \(|\langle L,M\rangle|\).
\end{proof}

\begin{lemma}\label{lem:app-H4-walk-counts}
For two lines \(L,M\in\mathcal L\), the entries of
\(A_{\Gamma}^{2},A_{\Gamma}^{3},A_{\Gamma}^{4}\) depend only on
\(|\langle L,M\rangle|\), and are given by
\begin{equation}\label{eq:app-H4-walk-table}
\begin{array}{c|ccccc}
|\langle L,M\rangle|
& 1 & \frac{1}{2} & 0 & \frac{\varphi^{-1}}{2} & \frac{\varphi}{2} \\
\hline
(A_{\Gamma}^{2})_{L,M}
& 20 & 7 & 8 & 5 & 5 \\
(A_{\Gamma}^{3})_{L,M}
& 140 & 147 & 128 & 125 & 125 \\
(A_{\Gamma}^{4})_{L,M}
& 2940 & 2687 & 2688 & 2625 & 2625 .
\end{array}
\end{equation}
\end{lemma}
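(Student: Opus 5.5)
The plan is to derive all three rows of \eqref{eq:app-H4-walk-table} from the transition matrix \(P\) of Lemma~\ref{lem:app-H4-transition-table}, by induction on the walk length, with no further coordinate enumeration. For \(k\ge1\) and \(i\in\{0,\ldots,4\}\), let \(a_k(i)\) denote the common value of \((A_\Gamma^{k})_{L,M}\) over all pairs with \(|\langle L,M\rangle|=R_i\), whenever that value is well defined. The base case is \(k=1\). By definition of \(\Gamma\), \((A_\Gamma)_{L,M}=1\) exactly when \(|\langle L,M\rangle|=\tfrac12\). So \(a_1=(0,1,0,0,0)^{\top}\), where the entry for \(R_0=1\) is \(0\) because \(\Gamma\) has no loops.

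For the inductive step, write
\[
(A_\Gamma^{k+1})_{L,M}=\sum_{N\sim L}(A_\Gamma^{k})_{N,M}.
\]
Suppose \((A_\Gamma^{k})_{N,M}\) depends only on \(|\langle N,M\rangle|\). Group the \(20\) neighbors \(N\) of \(L\) by the class \(R_j\) of \(|\langle N,M\rangle|\). By Lemma~\ref{lem:app-H4-transition-table}, the number of neighbors in class \(R_j\) is \(p_{ij}\), and this number depends only on the class \(R_i\) of the pair \((L,M)\). Hence
\[
(A_\Gamma^{k+1})_{L,M}=\sum_j p_{ij}\,a_k(j),
\]
which again depends only on \(i\). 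This proves the dependence claim for every \(k\) and gives the recursion \(a_{k+1}=P\,a_k\).

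It remains to evaluate the recursion. The vector \(a_2=Pa_1\) is the second column of \(P\), namely \((20,7,8,5,5)\). Next, \(a_3=Pa_2\) has entries
\[
20\cdot7=140,\qquad 20+49+48+15+15=147,\qquad 56+32+20+20=128,
\]
and \(125\) for each of the last two rows. These match the \(A_\Gamma^{3}\) row of the table. Finally, \(a_4=Pa_3\) has entries
\[
20\cdot147=2940,\qquad 140+1029+768+375+375=2687,\qquad 8\cdot147+4\cdot128+8\cdot125=2688,
\]
and \(5(147+128+125+125)=2625\) for each of the last two rows. This is the \(A_\Gamma^{4}\) row.

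The only step with real content is the uniformity of the transition counts \(p_{ij}\) over all pairs in a given class. That is exactly what Lemma~\ref{lem:app-H4-transition-table} supplies, so here it is simply invoked. The one point to check carefully is the orientation of the counts: \(p_{ij}\) counts neighbors of the first line \(L\) measured against the second line \(M\), which is precisely what the expansion \(\sum_{N\sim L}(A_\Gamma^k)_{N,M}\) requires.
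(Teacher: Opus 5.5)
Your proposal is correct and follows essentially the same route as the paper, which reads off \(a_2\) as the second column of \(P\) and then computes \(a_3=Pa_2\) and \(a_4=Pa_3\) from Lemma~\ref{lem:app-H4-transition-table}. Your induction from \(a_1=(0,1,0,0,0)^\top\) spells out the class-dependence claim and the recursion \(a_{k+1}=Pa_k\), which the paper leaves implicit, and your arithmetic checks out.
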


\begin{proof}[Proof of Lemma~\ref{lem:app-H4-walk-counts}]
The length-two walk counts are the second column of \(P\), hence
\[
    \boldsymbol{a}_{2}=(20,7,8,5,5)^\top.
\]
For longer walks, the transition table gives
\[
    \boldsymbol{a}_{3}=P\boldsymbol{a}_{2}=(140,147,128,125,125)^\top
\]
and
\[
    \boldsymbol{a}_{4}=P\boldsymbol{a}_{3}=(2940,2687,2688,2625,2625)^\top.
\]
This proves the table.
\end{proof}

\begin{proof}[Proof of Lemma~\ref{lem:H4-line-incidence}]
Parts \textup{(i)} and \textup{(ii)} follow from
Lemmas~\ref{lem:app-H4-A4-incidences} and
\ref{lem:app-H4-line-valencies}.  For \textup{(iii)}, the pair-incidence
table \eqref{eq:app-H4-A4-pair-table} says that the entries of \(BB^\top\)
are
\[
\begin{array}{c|ccccc}
|\langle L,M\rangle|
& 1 & \frac{1}{2} & 0 & \frac{\varphi^{-1}}{2} & \frac{\varphi}{2} \\
\hline
(BB^\top)_{L,M}
& 10 & 3 & 2 & 0 & 0 .
\end{array}
\]
Comparing this with the first two rows of
\eqref{eq:app-H4-walk-table}, and using that \(A_{\Gamma}\) has entry \(1\)
exactly in the relation \(|\langle L,M\rangle|=1/2\), we have
\[
    BB^\top=\frac{-A_{\Gamma}^{3}+25A_{\Gamma}^{2}+80A_{\Gamma}}{36}.
\]
For \textup{(iv)}, the last two rows of \eqref{eq:app-H4-walk-table} give,
entry by entry,
\[
    A_{\Gamma}^{4}-21A_{\Gamma}^{3}+400A_{\Gamma}=0.
\]
Since \(t^{3}-21t^{2}+400=(t-5)(t+4)(t-20)\), this is exactly
\[
    A_{\Gamma}(A_{\Gamma}-5I)(A_{\Gamma}+4I)(A_{\Gamma}-20I)=0.
\]
\end{proof}

\end{document}